\newtheorem{theorem}{Theorem}[section]
\newtheorem{lemma}[theorem]{Lemma}
\newtheorem{remark}[theorem]{Remark}
\theoremstyle{definition} \theoremstyle{remark}
\numberwithin{equation}{section}
\def\R{\mathbb R}
\def\N{\mathbb N}
\def\<{\langle}
\def\>{\rangle}
\def\diag{\mathrm{Diag}}
\newcommand{\tr}{\operatorname{tr}}
\begin{document}

\title[Trace and determinant preserving maps of matrices]{Trace and determinant preserving maps of matrices}

\author{Huajun Huang, Chih-Neng Liu, Patr\'{i}cia Szokol, Ming-Cheng Tsai$^\dag$, Jun Zhang}

\address[Huang and Tsai]{Department of Mathematics and Statistics, Auburn University,
Auburn, AL 36849, USA.}

\email[Huang]{huanghu@auburn.edu}

\email[Tsai]{mctsai2@gmail.com}

\address[Liu]{Department of Applied Mathematics, National Sun Yat-sen University, Kaohsiung, 80424, Taiwan}

\email[Liu]{cnliu@mail.nsysu.edu.tw}

\address[Szokol]{MTA-DE ``Lend\"ulet'' Functional Analysis Research Group, Institute of Mathematics,
University of Debrecen, P.O. Box 12, 4010 Debrecen, Hungary \\
Institute of Mathematics, University of Miskolc, 3515 Miskolc, Hungary}

\email[Szokol]{matszp@uni-miskolc.hu, szokolp@science.unideb.hu}

\address[Zhang]{School of Mathematics and Statistics,
Central China Normal University, Wuhan, Hubei 430079, China.}
\email[Zhang]{zhjun@mail.ccnu.edu.cn}

\thanks{$\dag$ corresponding author}

 \date{\today}

\maketitle

\begin{abstract}

Suppose a map $\phi$ on the set of positive definite matrices satisfies $\det(A+B)=\det(\phi(A)+\phi(B))$. Then we have $$\tr(AB^{-1}) =  \tr(\phi(A){\phi(B)}^{-1}).$$
Through this viewpoint, we show that $\phi$ is of the form $\phi(A)= M^*AM$ or $\phi(A)= M^*A^tM$ for some invertible matrix $M$ with $\det (M^*M)=1$. We also characterize the map $\phi: \mathcal{S} \rightarrow \mathcal{S}$ preserving the determinant of convex combinations in $\mathcal{S}$ by using similar method. Here $\mathcal{S}$ can be the set of complex matrices, positive definite matrices, symmetric matrices, and upper triangular matrices.
\vspace{0.4cm}

\end{abstract}

\section{Introduction}

Let $M_n$ be the set of $n\times n$ complex matrices. Let $H_n$ (resp., $M_n^{+}$, $P_n$, $S_n$, $T_n$, $D_n$) be the set of hermitian (resp., positive semi-definite, positive definite, symmetric, upper triangular, diagonal) matrices.

Over the last years, one of the most active research topics in matrix theory is the linear preserver problem (see \cite{Br97}, \cite{Li01}, \cite{Li92}). Many interesting results about preserver problems in different matrices were discussed and obtained. They included preservers on determinant, eigenvalue, spectrum, permanent, rank, commutativity, product, trace, norm,  etc. In 1897, Frobenius  studied the following problem. Let $\phi: M_n\rightarrow M_n$ be a linear mapping satisfying
\begin{equation}\label{97}
\det(\phi(A))=\det(A), \quad \quad A \in M_n.
\end{equation}
Then there exist $M$, $N\in M_n$ with $\det(MN)=1$ such that either
\begin{equation}\label{Mnform1}
\phi(A)=MAN, \quad \quad A \in M_n,
\end{equation}
or
\begin{equation}\label{Mnform2}
\phi(A)=MA^tN, \quad \quad A \in M_n
\end{equation}
(see \cite{Fr97}). Here $A^t$ denotes the transpose of $A\in M_n$. In 1959, Marcus and Moyls (\cite{Ma59}) proved that if $\phi: M_n\rightarrow M_n$ is a linear map preserving the set of matrices of rank $k$ for all $k=1, \ldots, n$, then there exist invertible matrices $U$ and $V$ in $M_n$ such that  $\phi(A)=UAV$ for all $A$ in $M_n$ or $\phi(A)=UA^tV$ for all $A$ in $M_n$. Applying this result, they gave another proof of the problem (\ref{97}).
%and also characterized the structure of a linear map on $M_n$ preserving eigenvalues counting multiplicities.
In 1969, Eaton showed that a linear map $\phi$ on the linear space of $n\times n$ real symmetric matrices sending the elements in the cone of $n\times n$ real positive definite matrices to the same cone and satisfying $\det(\phi(A))=c(\det A)$ for a nonzero real constant $c$ must be of the form $\phi(A)=MAM^t$ for some invertible matrix $M$ in $M_n$ (see \cite{Ea69}).
%In 1986, the infinite dimensional case of a map preserving spectrum is also discussed by Jafarian and Sourour (\cite{Ja86}).
In 2002 and 2003, Dolinar and \v{S}emrl (\cite{Do02}) and Tan and Wang (\cite{Tan03}) considered Frobenius problem by removing the linearity of $\phi$ and changing condition (1.1) to
\begin{equation}\label{02}
\det(\phi(A)+\lambda\phi(B))=\det(A+\lambda B), \quad \quad A \in M_n, \lambda\in \mathbb{C}.
\end{equation}
Then they showed that the {conclusion is the same as that of the Frobenius Theorem, i.e. $\phi$ is either of the form (1.2) or (1.3)}. Moreover, in \cite{Tan03} Tan and Wang also considered the problem on $T_n$. In 2004, Cao and Tang (\cite{Ca04}) studied the problem on $S_n$.

Alternatively, Wigner's unitary-antiunitary theorem says that if $\phi$ is a bijective map defined on the set of all rank one projections acting on a Hilbert space $H$ {satisfying} $\tr(\phi(P)\phi(Q))=\tr(PQ)$, then there {exists} a unitary operator $U$ on $H$ such that $\phi(P)=U^{*}PU$ or $\phi(P)=U^{*}P^{t}U$ for all rank one projections $P$. In 1963, Uhlhorn {generalized} Wigner's theorem to show that the same conclusion holds if the equality $\tr(\phi(P)\phi(Q))=\tr(PQ)$ is replaced by $\tr(\phi(P)\phi(Q))=0 \Leftrightarrow \tr(PQ)=0$ (see \cite{Uh63}). In 2012, Li, Plevnik, and {\v S}emrl (\cite{Li12}) characterized bijective maps $\phi: S\rightarrow S$ satisfying $\tr(\phi(A)\phi(B))=c  \Leftrightarrow \tr(AB)=c$ for a given real number $c$, where $S$ is the set of $n\times n$ hermitian matrices, the set of $n\times n$ real symmetric matrices, or the set of projections of rank one.

In this paper, we study the relationship {between} the maps preserving determinants and trace equalities, and characterize this kind of maps. The paper is organized as follows. In Section 2, we show that a map $\phi$ on $\mathcal{S}$ satisfying trace equalities $\tr(\phi(A)\phi(B)^{-1})=\tr(AB^{-1})$ or $\tr(\phi(A)\phi(B))=\tr(AB)$ is linear, where $\mathcal{S}=S_n$, $M_n$, $T_n$, or $P_n$. These results are useful in the consequent sections. In Section 3, we consider the maps $\phi: P_n\rightarrow P_n$ satisfying
\begin{equation}\label{15}
\det(\phi(A)+\phi(B))=\det(A+B), \quad \quad A, B \in P_n.
\end{equation}
Through differentiation, we can get $\tr(\phi(A){\phi(B)}^{-1})=\tr(AB^{-1})$. Then we show that $\phi$ is of the form $\phi(A)= M^*AM$ or $\phi(A)= M^*A^tM$ for some invertible matrix $M$ with $\det (M^*M)=1$. In Sections 4, 5, and 6, we consider the analogous problems on $S_n$, $M_n$, and $T_n$, and provide the similar theorems to enrich Dolinar, {\v S}emrl, Tan, Wang, Cao, and Tang's results (see \cite{Ca04}, \cite{Do02} and \cite{Tan03}).

\section{Preliminaries}

In this section, we provide some lemmas to show that some trace equalities could lead to linearity.

%It is the general result which will be applied in Lemmas \ref{homo} and \ref{homoPn}.

\begin{lemma}\label{homo1}
Let $\mathcal{A}$ and $\mathcal{C}$ be subspaces of $M_n$, $\mathcal{B}$, $\mathcal{D}$ and $\mathcal{E}$ be subsets of $M_n$, $\phi: \mathcal{A}\rightarrow \mathcal{C}$, $f: \mathcal{B}\rightarrow \mathcal{D}$, and $g: \mathcal{B}\rightarrow \mathcal{E}$ be maps that satisfy the following conditions.
\begin{itemize}
\item[(1)] $\tr(\phi(A)f(B))=\tr(Ag(B)), \quad \quad \forall \ A\in\mathcal{A}, \ B\in \mathcal{B}.$
\item[(2)] Given $C\in \mathcal{C}$, if $\tr(C f(B))=0$ for all $B \in \mathcal{B}$, then $C=0$.
%\item[(3)] Given $T\in \mathcal{D}_2$, if $\tr(AT)=0$ for all $A \in \mathcal{A}$, then $T=0$.
\end{itemize}
Then $\phi$ is linear.

\end{lemma}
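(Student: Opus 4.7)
The plan is essentially a two-line argument, exploiting the subspace structure of $\mathcal{A}$ and $\mathcal{C}$ together with the non-degeneracy condition (2), so I will outline it at the level of paragraphs rather than symbols.

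First I would fix arbitrary elements $A_1, A_2 \in \mathcal{A}$ and scalars $\alpha_1, \alpha_2$, and form the ``defect'' element
\[
C \;:=\; \phi(\alpha_1 A_1 + \alpha_2 A_2) - \alpha_1 \phi(A_1) - \alpha_2 \phi(A_2).
\]
Since $\mathcal{C}$ is assumed to be a subspace of $M_n$ and each $\phi(A_i), \phi(\alpha_1 A_1+\alpha_2 A_2)$ lies in $\mathcal{C}$, the element $C$ belongs to $\mathcal{C}$ — so condition (2) is available to be applied to it. The goal reduces to showing $\tr(Cf(B))=0$ for every $B\in\mathcal{B}$.

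Next I would compute $\tr(Cf(B))$ for an arbitrary $B\in\mathcal{B}$ using condition (1) twice. Applied with $A = \alpha_1 A_1+\alpha_2 A_2 \in \mathcal{A}$ (this uses that $\mathcal{A}$ is a subspace), identity (1) turns $\tr(\phi(\alpha_1 A_1+\alpha_2 A_2)f(B))$ into $\tr((\alpha_1 A_1+\alpha_2 A_2)g(B))$, which by linearity of the trace equals $\alpha_1\tr(A_1 g(B))+\alpha_2\tr(A_2 g(B))$. Applying (1) again, this time to $A_1$ and $A_2$ separately, rewrites each summand as $\alpha_i\tr(\phi(A_i)f(B))$. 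Bringing everything to one side gives $\tr(Cf(B))=0$.

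Finally, since this holds for every $B\in\mathcal{B}$, hypothesis (2) forces $C=0$, which is exactly the additivity/homogeneity identity $\phi(\alpha_1 A_1+\alpha_2 A_2) = \alpha_1 \phi(A_1)+\alpha_2 \phi(A_2)$. Hence $\phi$ is linear. There is no serious obstacle here; the only subtlety is that the argument tacitly uses both that $\mathcal{A}$ is closed under linear combinations (to make sense of $\phi(\alpha_1 A_1+\alpha_2 A_2)$ and to invoke (1) on it) and that $\mathcal{C}$ is closed under linear combinations (to know $C\in\mathcal{C}$ so that (2) applies). Both are guaranteed by the subspace assumptions, so the proof goes through cleanly.
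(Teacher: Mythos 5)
Your proposal is correct and follows essentially the same argument as the paper: form the linearity defect, use condition (1) and linearity of the trace to show it pairs to zero with every $f(B)$, and invoke the non-degeneracy condition (2) to conclude the defect vanishes. The only cosmetic difference is that the paper treats homogeneity and additivity in two separate steps while you combine them into a single linear combination.
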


\begin{proof}
For any $A\in \mathcal{A}$, $B\in \mathcal{B}$, $\lambda\in \mathbb{C}$, by condition (1), we have $$\tr[(\phi(\lambda A)-\lambda\phi(A))f(B)]
=\tr(\lambda Ag(B))-\lambda\tr(Ag(B))=0.$$
Hence $\phi(\lambda A)=\lambda\phi(A)$ by condition (2). Similarly, we can obtain that $\phi(A_1+A_2)=\phi(A_1)+\phi(A_2)$  \qedhere

\end{proof}

The following lemma will be used to prove Lemmas \ref{homo} and \ref{homoPn}.

\begin{lemma}\label{lma1}
Let $A\in \mathcal{S}$, $\mathcal{S}=M_n, H_n, S_n, D_n$. If  $\tr(AB)=0$ for any invertible matrix $B$ in $\mathcal{S}$, then $A=0$.
 \end{lemma}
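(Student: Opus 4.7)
The plan is a two-step density-plus-pairing argument, uniform across the four cases $\mathcal{S}\in\{M_n,H_n,S_n,D_n\}$.

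First I would show that the invertible matrices of $\mathcal{S}$ are dense in $\mathcal{S}$. For this I use that $I\in\mathcal{S}$ in all four cases, so for $A\in\mathcal{S}$ the perturbation $A+tI$ still lies in $\mathcal{S}$ for every $t\in\mathbb{C}$. The polynomial $t\mapsto\det(A+tI)$ is monic of degree $n$, hence has at most $n$ roots, so I can find a sequence $t_k\to 0$ with $A+t_kI$ invertible. By continuity of the linear functional $B\mapsto\tr(AB)$, the hypothesis that $\tr(AB)=0$ for every invertible $B\in\mathcal{S}$ then upgrades to
\[
\tr(AB)=0\qquad\text{for every }B\in\mathcal{S}.
\]

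Second, I would exhibit a single $B\in\mathcal{S}$ that forces $A=0$. The natural choice is $B:=A^{*}$, because each of the four sets is closed under the adjoint: for $M_n,H_n,D_n$ this is immediate, and for $S_n$ one checks that if $A^{t}=A$ then $(A^{*})^{t}=\overline{(A^{t})^{t}}=\overline{A^{t}}=A^{*}$, so $A^{*}\in S_n$. Substituting this admissible $B$ into the extended identity yields
\[
0=\tr(AA^{*})=\|A\|_{F}^{2},
\]
and therefore $A=0$.

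The main obstacle, such as it is, is just checking the closure of $S_n$ under the adjoint; everything else (density via the characteristic polynomial trick, and the positivity of the Frobenius inner product) is routine. No case analysis on the entries is required, which keeps the argument uniform and short.
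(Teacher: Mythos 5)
Your argument is correct, but it is genuinely different from the paper's. The paper proves the lemma entrywise: for each index pair $(j,k)$ it exhibits an explicit invertible test matrix of the form $\lambda I_n+E_{kj}$ (or $\lambda I_n+E_{jk}+E_{kj}$, $\lambda I_n+iE_{jk}+E_{kj}$, $\lambda I_n+E_{jj}$, depending on which of $M_n$, $S_n$, $H_n$, $D_n$ is at hand) and reads off the vanishing of the corresponding entry of $A$ from $\tr(AB)=0$; this forces a separate case analysis for each of the four sets. You instead first upgrade the hypothesis from invertible $B$ to all $B\in\mathcal{S}$ by the density of invertible matrices (the characteristic-polynomial perturbation $B+tI$), and then kill $A$ in one stroke by testing against $B=A^{*}$ and invoking $\tr(AA^{*})=\|A\|_{F}^{2}>0$ for $A\neq 0$. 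Your route is shorter and uniform, at the price of needing each $\mathcal{S}$ to be closed under the adjoint --- which you correctly verify, and which is exactly why the paper's list excludes $T_n$ while its entrywise method would also have something to say there. One small slip: the claim that $A+tI\in\mathcal{S}$ for \emph{every} $t\in\mathbb{C}$ fails for $\mathcal{S}=H_n$, which is only a real subspace; you must restrict to $t\in\mathbb{R}$ there, which costs nothing since the monic polynomial $\det(B+tI)$ has at most $n$ real roots. With that one-word repair the proof is complete.
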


\begin{proof}

Suppose $A=(a_{jk})\in M_n$. Let $B=\lambda I_n+E_{kj}$, {where $\lambda\neq0, -1$, $\lambda\in \mathbb{R}$, and $E_{kj}$ denotes the matrix having a 1 in the $(k, j)$-th position and zeros elsewhere, $1\leq j, k\leq n$}. Then $0=\tr(AB)=\lambda \tr(A)+a_{jk}$ and hence $a_{jk}=0$ for all $1\leq j, k\leq n$. Thus $A=0$.

Let $A=(a_{jk})\in\mathcal{S}$. Similarly, if $\mathcal{S}=S_n$, then we can choose $B=\lambda I_n+E_{jk}+E_{kj}$, where $\lambda\neq0, \pm1, -2$, $1\leq j, k\leq n$ to obtain $A=0$. In addition, if $\mathcal{S}=D_n$, then we can choose $B=\lambda I_n+E_{jj}$, where $\lambda\neq0, 1$, $1\leq j\leq n$ to derive $A=0$. Finally, if $\mathcal{S}=H_n$, then we can choose $B=\lambda I_n+E_{jk}+E_{kj}$, where $\lambda\neq0, \pm1, -2$, $1\leq j, k\leq n$ to obtain $a_{jk}+a_{kj}=0$. On the other hand, let $B=\lambda I_n+iE_{jk}+E_{kj}$, where $\lambda\neq0, -1$, $\lambda\in \mathbb{R}$, $1\leq j\neq k\leq n$ to derive $a_{jk}+ia_{kj}=0$. Hence we can deduce that $A=0$.  \qedhere

%
%(1) $\mathcal{S}=M_n$: For $A\in M_n$, by polar decomposition, there exist unitary matrix $U\in M_n$ and positive semi-definite matrix $P\in M_n$ such that $A=UP$. Choose $B=U^{*}$. Then
%$$\tr(P)=\tr((U^{*}U)P)=\tr((UP)U^{*})=\tr(AB)=0.$$
%Since $P\geq0$, we have $P=0$. Hence $A=0$.
%
%
%(2) $\mathcal{S}=H_n$: For any positive semi-definite matrix $B\in H_n$, by taking limits under the trace, we obtain that $\tr(AB)=0$. Since $A=A^*$, there exist $A^{+}$, $A^{-}\geq0$ such that $A^{+}A^{-}=A^{-}A^{+}=0$ and $A=A^{+}-A^{-}$. Take $B=A^{-}$ in the equality $\tr(AB)=0$. Then we have $\tr(A^{-})^2=0$ and hence $A^{-}=0$. Similarly, $A^{+}=0$ by taking $B=A^{
%+}$ in the equality $\tr(AB)=0$. Therefore $A=0$.
%
%
%
%(3) $\mathcal{S}=S_n$: Suppose $B\in S_n$ is not invertible. Then by the linearity of trace and taking limits under the trace, we have $\tr(AB)=0$. Let $B=\bar{A}\in S_n$. Then $\tr(AA^*)=\tr(A\bar{A})=0$. Thus $AA^*=0$ and hence $A=0$.
%
%
%
%
%
%
%
%
%(4) $\mathcal{S}=D_n$: Let $A=(a_{ij})_{i,j}$. By assumption, $a_{ij}=0$ if $i\neq j$. Since for all $1\leq i\leq n$, we have $a_{ii}=\tr(A(I_n+E_{ii})-AI_n)=\tr A(I_n+E_{ii})-\tr (AI_n)=0$. Hence $A=0$.
%
%

%By Assumption, If $\Tr(Ac)=0$ For Some $C\In P_N$, Then $\Tr((C^{1/2}A^{1/2})^{*}(C^{1/2}A^{1/2}))=\Tr(A^{1/2}Ca^{1/2})=\Tr(Ac)=0$. Hence $C^{1/2}A^{1/2}=0$. Since $C$ Is Invertible, We Have $A^{1/2}=0$. It Implies That $A=0$.

 \end{proof}

Now we present Lemmas \ref{homo} and \ref{homoPn} which will be applied {in} the following sections. Firstly, we consider the cases $M_n$, $S_n$ and $T_n$.

\begin{lemma}\label{homo}

Let $(\mathcal{A}, \mathcal{C})$ be $(M_n, M_n)$, $(S_n, S_n)$, or $(T_n, D_n)$. Let $\phi: \mathcal{A}\rightarrow \mathcal{C}$ be a map sending invertible matrices in $\mathcal{A}$ into invertible matrices in $\mathcal{C}$. Suppose $\phi$ satisfy one of the following conditions.
\begin{itemize}
\item[(1)] $\tr(\phi(A)\cdot {\phi(B)}^{-1})=\tr(A\cdot B^{-1}), \quad \quad \forall \ A\in\mathcal{A}, \ \text{invertible \ matrix} \ B\in \mathcal{A}.$
\item[(2)] $\tr(\phi(A)\cdot \phi(B))=\tr(A\cdot B), \quad \quad \forall \ A\in\mathcal{A}, \ \text{invertible \ matrix} \ B\in \mathcal{A}.$
\end{itemize}
Then $\phi$ is linear. Moreover, $\phi$ is bijective if $\mathcal{A}=M_n$ or $S_n$.

%
%{\color{red}{(Could we have the same results when $\phi: B(H)^{+}_{-1}\rightarrow  B(H)^{+}_{-1}$, $\dim H=+\infty$ or $\phi$ on $S_2(H)$, i.e., Hilbert-Schmidt operator space?)}}

\end{lemma}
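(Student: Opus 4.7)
The plan is to derive Lemma~\ref{homo} from Lemma~\ref{homo1} in both cases (1) and (2), and then to obtain injectivity (hence bijectivity when $\mathcal{A}=\mathcal{C}$) directly from Lemma~\ref{lma1}. Let $\mathcal{B}$ denote the set of invertible matrices in $\mathcal{A}$. For case~(2) I would apply Lemma~\ref{homo1} with $f(B)=\phi(B)$ and $g(B)=B$; for case~(1), with $f(B)=\phi(B)^{-1}$ and $g(B)=B^{-1}$ (note that each $\mathcal{C}\in\{M_n,S_n,D_n\}$ is closed under inversion of invertible elements, so these maps are well-defined). In either situation, condition~(1) of Lemma~\ref{homo1} is exactly the hypothesized trace identity, so the whole task reduces to verifying condition~(2) of Lemma~\ref{homo1}: given $C\in\mathcal{C}$, if $\tr(Cf(B))=0$ for every invertible $B\in\mathcal{A}$, then $C=0$.

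I would establish this by showing that $\{f(B):B\in\mathcal{A}\text{ invertible}\}$ linearly spans $\mathcal{C}$, which suffices because the trace form is non-degenerate on each of $M_n,S_n,D_n$. The engine is the following transfer of linear dependence: if $\sum_i c_i f(B_i)=0$ in $\mathcal{C}$, then pairing with $\phi(A)$ in the trace and substituting the hypothesized identity yields $\tr\bigl(A\sum_i c_i g(B_i)\bigr)=0$ for every $A\in\mathcal{A}$. When $\mathcal{A}=M_n$ or $S_n$ the trace pairing is non-degenerate on $\mathcal{A}$, so $\sum_i c_i g(B_i)=0$; since $g$ restricts to a bijection of the invertible set and invertibles are Zariski-dense in $\mathcal{A}$, I can choose $\dim\mathcal{C}$ invertible matrices $B_i$ whose $g(B_i)$'s are linearly independent. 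This forces the $f(B_i)$'s to be linearly independent in $\mathcal{C}$, hence a basis.

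The pair $(T_n,D_n)$ is subtler, as the trace pairing on $T_n$ is degenerate (it only reads the diagonal), and the transfer above only yields that the diagonal of $\sum_i c_i g(B_i)$ vanishes. However, $\dim D_n=n$ and an explicit choice such as $B_i=I+E_{ii}$ gives $n$ invertible triangular matrices whose $g(B_i)$'s have linearly independent diagonals for both $g=\mathrm{id}$ and $g=(\pont)^{-1}$ (a short direct computation). So $\{f(B)\}$ still spans $D_n$, and condition~(2) of Lemma~\ref{homo1} is verified in all three cases, giving linearity of $\phi$.

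Finally, for bijectivity when $\mathcal{A}=\mathcal{C}=M_n$ or $S_n$, injectivity comes straight from the trace identity: $\phi(A_1)=\phi(A_2)$ forces $\tr((A_1-A_2)h(B))=0$ for every invertible $B$, where $h(B)=B$ or $B^{-1}$, and Lemma~\ref{lma1} then yields $A_1=A_2$. Combined with linearity on equidimensional finite-dimensional spaces, this gives bijectivity by rank-nullity. The principal obstacle I anticipate is the $(T_n,D_n)$ case, where the degeneracy of the trace pairing on $T_n$ prevents the clean transfer used for $M_n,S_n$; this is circumvented by working only with the diagonal projection and using the low dimensionality of $D_n$ to construct the required spanning set by hand.
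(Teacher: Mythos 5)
Your proposal is correct and follows essentially the same route as the paper: reduce to Lemma~\ref{homo1} with $f(B)=\phi(B)^{-1}$, $g(B)=B^{-1}$ (resp.\ $f(B)=\phi(B)$, $g(B)=B$), verify its condition~(2) by transferring linear independence through the trace identity so that $\{f(B)\}$ spans $\mathcal{C}$, invoke Lemma~\ref{lma1} for non-degeneracy, and get injectivity (hence bijectivity by rank--nullity) directly from the trace identity. Your explicit handling of the degenerate pairing in the $(T_n,D_n)$ case is a slightly more detailed version of the paper's one-line remark about choosing an invertible basis of $D_n$, but the substance is identical.
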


\begin{proof}

(1) Suppose $\phi$ satisfies $\tr(A\cdot B^{-1}) =  \tr(\phi(A)\cdot {\phi(B)}^{-1})$.

$\mathcal{A}=M_n$ or $S_n$:  First of all, select a basis $\{B_1, B_2, \ldots, B_{N}\}$ of invertible matrices in $\mathcal{A}$, where $N=\dim \mathcal{A}$. Then we claim that $\{\phi(B_j^{-1})^{-1}: 1\leq j\leq N\}$ are linearly independent in $\mathcal{A}$. Suppose $\sum_{1\leq j\leq N}\lambda_j\phi(B_j^{-1})^{-1}=0$. Hence for any $A\in \mathcal{A}$
\begin{align*}
\tr A(\sum_{1\leq j\leq N}\lambda_{j}(B_{j}^{-1})^{-1})   &   = \sum_{1\leq j\leq N}\lambda_{j}\tr(A(B_{j}^{-1})^{-1})   \\
& = \sum_{1\leq j\leq N}\lambda_{j}\tr(\phi(A)\phi(B_{j}^{-1})^{-1})=0.
\end{align*}
By Lemma \ref{lma1}, $\sum_{1\leq j\leq N}\lambda_{j}B_{j}=0$ and hence all $\lambda_j=0$. Thus { the set $\{\phi(B_{j}^{-1})^{-1}: 1\leq j\leq N\}$ is linearly independent and spans $\mathcal{A}$}. Suppose $C\in \mathcal{A}$, for any invertible matrix $A\in \mathcal{A}$, $\tr(C\phi(A)^{-1})=0$. Then for any { invertible } $B\in \mathcal{A}$, $\tr(CB)=0$. By Lemma \ref{lma1} again, $C=0$. Therefore, $\phi$ is linear by Lemma \ref{homo1} with $f(B)=\phi(B)^{-1}$ and $g(B)=B^{-1}$.

Now we want to prove that $\phi$ is injective. Assume $\phi(A)=0$ for some $A\in \mathcal{A}$. Then for any invertible $B$ in $\mathcal{A}$, $\tr(AB^{-1})=\tr(\phi(A)\phi(B)^{-1})=0$. By Lemma \ref{lma1}, we have $A=0$. Thus $\phi$ is injective and hence $\phi$ is bijective.

$\mathcal{A}=T_n$: Similarly, we can select an invertible basis $\{B_1, B_2, \ldots, B_{n}\}$ of $D_n$, and using Lemmas \ref{homo1} and \ref{lma1} to prove that $\phi$ is linear.

(2) Suppose $\phi$ satisfies $\tr(A\cdot B) =  \tr(\phi(A)\cdot \phi(B))$. Similarly, let $f(B)=\phi(B)$ and $g(B)=B$, we can apply Lemma \ref{homo1} with Lemma \ref{lma1} to prove that $\phi$ is linear.  \qedhere

%$\mathcal{A}=M_n$ or $S_n$:  By the similar argument in the case $M_n$ or $S_n$ of (1), we can show that for any basis $\{B_1, B_2, \ldots, B_{n^2}\}$ of $\mathcal{A}$, $\{\phi(B_{j}): 1\leq j\leq n^2\}$ is a basis of $\mathcal{A}$ and $\phi$ is also a bijective linear map.
%
%
%
%$\mathcal{A}=T_n$:  By the similar argument in above  case $T_n$ of (1), we can obtain that $\{\phi(B_{j}): 1\leq j\leq n\}$ is a basis of $D_n$ and $\phi$ is linear, where $\{B_1, B_2, \ldots, B_{n}\}$ is a basis of $D_n$.

 \end{proof}

%
%
%$\mathcal{A}=T_n$: Firstly, select a basis $\{B_1, B_2, \ldots, B_{n}\}$ of invertible matrices in $D_n$. Then we claim that $\{\phi(B_j^{-1})^{-1}: 1\leq j\leq n\}$ are linearly independent in $D_n$. Suppose $\sum_{1\leq j\leq n}\lambda_j\phi(B_j^{-1})^{-1}=0$. Hence for any $A\in D_n$
%\begin{align*}
%\tr A(\sum_{1\leq j\leq n}\lambda_{j}(B_{j}^{-1})^{-1})   &   = \sum_{1\leq j\leq n}\lambda_{j}\tr(A(B_{j}^{-1})^{-1})   \\
%& = \sum_{1\leq j\leq n}\lambda_{j}\tr(\phi(A)\phi(B_{j}^{-1})^{-1})=0.
%\end{align*}
%By Lemma \ref{lma1}, $\sum_{1\leq j\leq n}\lambda_{j}B_{j}=0$ and hence all $\lambda_j=0$. Thus $\{\phi(B_{j}^{-1})^{-1}: 1\leq j\leq n\}$ are linearly independent and span $D_n$. Suppose $C\in D_n$, for any invertible matrix $A\in T_n$, $\tr(C\phi(A)^{-1})=0$. Then for any invertible matrix $B=\sum_{1\leq j\leq n}\mu_{j}\phi(B_{j}^{-1})^{-1}\in D_n$, we have $\tr(CB)=\sum_{1\leq j \leq n}\mu_{j}\tr(C\phi(B_{j}^{-1})^{-1})=0$. By Lemma \ref{lma1} again, $C=0$. Therefore, $\phi$ is linear by Lemma \ref{homo1}.
%
%

For the case $P_n$, we have an analogous result.

\begin{lemma}\label{homoPn}

Let $\phi: P_n\rightarrow P_n$ be a map. Suppose $\phi$ satisfies one of the following conditions.
\begin{itemize}
\item[(1)] $\tr(\phi(A)\cdot {\phi(B)}^{-1})=\tr(A\cdot B^{-1}), \quad \quad \forall \ A, B\in P_n.$
\item[(2)] $\tr(\phi(A)\cdot \phi(B))=\tr(A\cdot B), \quad \quad \forall \ A, B\in P_n.$
\end{itemize}
Then $\phi$ is injective, additive and $\phi$ satisfies $\phi(\lambda A)=\lambda \phi(A)$ for all $A\in P_n$, $\lambda \in \R^{+}$.

\end{lemma}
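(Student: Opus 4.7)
The plan is to adapt the strategy of Lemma \ref{homo} to the cone $P_n$, working around the fact that $P_n$ is not a linear subspace. The central observation will be that, although $P_n$ is only a cone, the image family $\{\phi(B)^{-1}:B\in P_n\}$ in case (1) (and $\{\phi(B):B\in P_n\}$ in case (2)) still spans $H_n$ as a real vector space. Once this is established, injectivity, additivity, and positive homogeneity all follow directly from the trace identity.

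Injectivity is immediate. Under condition (1), if $\phi(A)=\phi(A')$ then $\tr((A-A')B^{-1})=0$ for every $B\in P_n$. Setting $X=A-A'\in H_n$ and using that $\{B^{-1}:B\in P_n\}=P_n$, the choices $C=I$ and $C=I+\varepsilon X$ (which lies in $P_n$ for sufficiently small $\varepsilon>0$) give $\tr(X)=0$ and then $\tr(X^2)=0$, forcing $X=0$. Condition (2) is handled identically with $B$ in place of $B^{-1}$.

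The main step is the spanning property. Choose $B_1,\dots,B_N\in P_n$ (where $N=n^2$) so that $\{B_i^{-1}\}_{i=1}^N$ is a basis of $H_n$; this is possible because inversion is a bijection on $P_n$ and $P_n$ has nonempty interior in $H_n$. I claim that $\{\phi(B_i)^{-1}\}_{i=1}^N$ is linearly independent in $H_n$. Indeed, if $\sum_i c_i\phi(B_i)^{-1}=0$ with $c_i\in\R$, then for every $A\in P_n$,
\[
0=\sum_i c_i\,\tr\bigl(\phi(A)\phi(B_i)^{-1}\bigr)=\sum_i c_i\,\tr(AB_i^{-1})=\tr\Bigl(A\sum_i c_iB_i^{-1}\Bigr),
\]
so the Hermitian matrix $\sum_i c_iB_i^{-1}$ is trace-orthogonal to every element of $P_n$ and hence is zero (by the same trick as in the injectivity step), forcing all $c_i=0$.

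Additivity and $\R^+$-homogeneity then follow mechanically. For $A,A'\in P_n$ and $t>0$, both $A+A'$ and $tA$ lie in $P_n$, and the trace identity applied to each side gives
\[
\tr\bigl([\phi(A+A')-\phi(A)-\phi(A')]\phi(B)^{-1}\bigr)=0\quad\text{and}\quad\tr\bigl([\phi(tA)-t\phi(A)]\phi(B)^{-1}\bigr)=0
\]
for every $B\in P_n$. Since the bracketed matrices are Hermitian and trace-orthogonal to the basis $\{\phi(B_i)^{-1}\}_{i=1}^N$ of $H_n$, they vanish. Case (2) is parallel, using a basis $\{B_i\}_{i=1}^N$ of $H_n$ drawn directly from $P_n$ and replacing $\phi(B)^{-1}$ by $\phi(B)$ throughout. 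The only real obstacle is the spanning step, which substitutes for the linear-algebra argument available in Lemma \ref{homo}; no continuity or surjectivity of $\phi$ is needed.
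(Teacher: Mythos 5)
Your proof is correct and follows essentially the same route as the paper's: the paper likewise reduces everything to showing that the family $\{\phi(B_i)^{-1}\}$ is linearly independent, hence spans $H_n$ (forced by the trace identity together with nondegeneracy of the trace form on $H_n$), and then kills the additivity and homogeneity defects by orthogonality to that spanning set. The only cosmetic difference is that the paper first extends $\phi$ to a linear map on all of $H_n$ via a basis drawn from $P_n$ and then invokes Lemmas \ref{homo1} and \ref{lma1}, whereas you argue directly on the cone $P_n$ and use the $I$ and $I+\varepsilon X$ trick in place of Lemma \ref{lma1}; both are fine.
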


\begin{proof}

(1) Suppose $\phi$ satisfies $\tr(\phi(A)\cdot {\phi(B)}^{-1})=\tr(A\cdot B^{-1})$. We extend $\phi$ to a map $\widetilde{\phi}: H_n\rightarrow H_n$ that satisfies $\tr(\widetilde{\phi}(A)\cdot {\phi(B)}^{-1})=\tr(A\cdot B^{-1})$ for $A\in H_n$, $B\in P_n$. Fix a maximal real linearly independent subset $\{S_i: 1\leq i\leq n^2\}$ of $P_n$. Then $\{S_i: 1\leq i\leq n^2\}$ is a real basis of $H_n$. For $A\in P_n$, we define $\widetilde{\phi}(A)=\phi(A)$. For $A\in H_n \setminus P_n$, suppose $A=\sum\limits_{i=1}^{n^2}c_iS_{i}$, we define $\widetilde{\phi}(A)=\sum\limits_{i=1}^{n^2}c_i\phi(S_{i})$. Then $\widetilde{\phi}$ satisfies the desired property. In addition, suppose $C\in H_n$ with $\tr(CB)=0$ for any $B\in P_n$. Then for any $\widetilde{B}\in H_n$, it can be expressed as $\widetilde{B}=B_1-B_2$ for some $B_1, B_2\in P_n$. Thus $\tr(C\widetilde{B})=0$ and hence $C=0$ by Lemma \ref{lma1}. By similar argument in the proof of Lemma \ref{homo}, we can show that $\widetilde{\phi}$ is linear and injective. Therefore, $\phi$ is injective, additive and satisfies $\phi(\lambda A)=\lambda \phi(A)$ for any $A\in P_n$ and $\lambda\in R^{+}$ by Lemmas \ref{homo1} and \ref{lma1}.

%On the other hand, assume $\phi(A_1)=\phi(A_2)$ for some $A_1$, $A_2\in P_n$. Then for $B$ in $P_n$,
%$$\tr((A_1-A_2)B^{-1})=\tr(A_1B^{-1})-\tr(A_2B^{-1})=
%\tr(\phi(A_1)\phi(B)^{-1})-\tr(\phi(A_2)\phi(B)^{-1})=0.$$
%Since $P_n$ is dense in $H_n$ and trace function is continuous, $\tr((A_1-A_2)C)=0$ holds for all $C\in H_n$. Hence Lemma \ref{lma1} implies that $A_1=A_2$. That is, $\phi$ is injective.

(2) Suppose $\phi$ satisfies $\tr(A\cdot B) =  \tr(\phi(A)\cdot \phi(B))$. Then our assertion follows from the similar argument in above case (1).  \qedhere  \end{proof}

\section{Maps  preserving determinants of convex combinations on $P_n$}

The aim of the present section is to prove the following theorem.

\begin{theorem}\label{detP1}

Let $\phi: P_n\rightarrow P_n$ be a map and $\alpha=\det \phi(I)^{1/n}$. Then the following statements are equivalent.
\begin{itemize}
\item[(1)] There exists an invertible matrix $M\in M_n$ with $\det (M^*M)=1$ such that $\phi$ is either
\begin{equation*}%\label{UAU}
\phi(A)=\alpha M^{*}AM   \quad  \quad  \forall \ A\in P_n \quad    \quad \text{or}  \quad    \quad    \phi(A)=\alpha M^{*}A^{t}M  \quad \quad \forall \ A\in P_n.
\end{equation*}
\item[(2)]
\begin{equation}\label{1}
\det(\phi(A)+\phi(B))=\alpha^n\det(A+B), \quad \quad A,B \in P_n.
\end{equation}
\item[(3)] $  \tr(\phi(A){\phi(B)}^{-1})=\tr(AB^{-1}), \quad \quad A,B \in P_n.$
\end{itemize}

\end{theorem}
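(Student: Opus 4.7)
I will prove the circular implications $(1) \Rightarrow (2) \Rightarrow (3) \Rightarrow (1)$. The implications from $(1)$ are verified by direct substitution using $\det(M^{*}M) = 1$, cyclicity of the trace, and $\tr((B^{-1}A)^{t}) = \tr(AB^{-1})$ in the transpose case, so the substance of the argument lies in $(2) \Rightarrow (3)$ and $(3) \Rightarrow (1)$.

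For $(2) \Rightarrow (3)$, setting $A = B$ in (\ref{1}) first gives $\det \phi(B) = \alpha^{n}\det B$. Substituting $A = cB$ for $c > 0$, the positive eigenvalues $y_{1}, \dots, y_{n}$ of $\phi(B)^{-1/2}\phi(cB)\phi(B)^{-1/2}$ satisfy $\prod y_{i} = c^{n}$ and $\prod (1+y_{i}) = (c+1)^{n}$; Jensen's inequality applied to the convex function $x \mapsto \log(1+e^{x})$ forces $y_{1} = \dots = y_{n} = c$, whence $\phi(cB) = c\phi(B)$. Using this $\mathbb{R}^{+}$-homogeneity, replacing $B$ with $sB$ in (\ref{1}) and applying the factorization $\det(X + sY) = s^{n}\det(Y)\det(I+s^{-1}Y^{-1}X)$ turns the identity into an equality of polynomials in $s$; comparing the coefficients of $s^{n-1}$ yields $\tr(\phi(B)^{-1}\phi(A)) = \tr(B^{-1}A)$.

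For $(3) \Rightarrow (1)$, Lemma \ref{homoPn} provides that $\phi$ is injective, additive, and $\mathbb{R}^{+}$-homogeneous, so it extends to a real-linear bijection $\Phi : H_{n} \to H_{n}$. Let $C = \Phi(I)$, and let $\Phi^{\dagger}$ be the adjoint of $\Phi$ with respect to the real trace inner product. The trace identity becomes $\Phi^{\dagger}(\Phi(B)^{-1}) = B^{-1}$ on $P_{n}$. Expanding this in $t$ for $B = I+tX$ with $X \in H_{n}$ up to order $t^{2}$ yields successively $\Phi^{\dagger}(C^{-1}) = I$, $\Phi^{\dagger}(C^{-1}\Phi(X)C^{-1}) = X$, and finally $\Phi(X^{2}) = \Phi(X)C^{-1}\Phi(X)$. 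Thus $\Psi(X) := C^{-1/2}\Phi(X)C^{-1/2}$ is a unital real-linear map on $H_{n}$ with $\Psi(X^{2}) = \Psi(X)^{2}$. Its complex-linear extension is then a unital Jordan automorphism of $M_{n}$, so by the classical Herstein classification it has the form $X \mapsto UXU^{-1}$ or $X \mapsto UX^{t}U^{-1}$ for some invertible $U$. The requirement $\Psi(H_{n}) \subseteq H_{n}$ forces $U^{*}U$ to be a positive scalar multiple of $I$, i.e., $U$ is a scalar multiple of a unitary; unwinding $\Phi(X) = C^{1/2}\Psi(X)C^{1/2}$ then produces $\Phi(X) = M^{*}XM$ (or $M^{*}X^{t}M$) with $M^{*}M = C$. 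Finally the rescaling $M \mapsto \alpha^{-1/2}M$ together with $\det C = \alpha^{n}$ yields the stated form $\phi(A) = \alpha M^{*}AM$ with $\det(M^{*}M) = 1$.

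I expect the principal obstacle to be the derivation of the Jordan-algebraic identity $\Phi(X^{2}) = \Phi(X)C^{-1}\Phi(X)$ from the adjoint reformulation of the trace identity, together with the careful bookkeeping needed to verify that the complex-linear extension remains a Jordan homomorphism of $M_{n}$ and that the Hermiticity and positivity constraints reduce the classifying invertible $U$ to a unitary up to a positive scalar.
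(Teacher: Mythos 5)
Your proposal is correct, and while its overall skeleton matches the paper's (homogeneity plus a first-order expansion of the determinant identity to get from (2) to (3); linearity via Lemma \ref{homoPn} plus a Jordan-algebraic classification to get from (3) to (1)), the key step $(3)\Rightarrow(1)$ is carried out by a genuinely different mechanism. The paper first normalizes to the unital case by conjugating with $\phi(I)^{-1/2}$, passes through the intermediate identity $\tr(\phi(A)\phi(B))=\tr(AB)$, and then establishes $\phi(A^2)=\phi(A)^2$ by combining Choi's inequality $\phi(A)^{-1}\leq\phi(A^{-1})$ with Kadison's inequality $\phi(A)^2\leq\phi(A^2)$ and a trace argument (Lemmas \ref{traceAB3} and \ref{traceAB2}). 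You instead keep $C=\Phi(I)$ in play, rewrite the trace identity as $\Phi^{\dagger}(\Phi(B)^{-1})=B^{-1}$, and extract the Jordan identity $\Phi(X^2)=\Phi(X)C^{-1}\Phi(X)$ from the second-order Taylor coefficient at $B=I+tX$, using the order-one coefficient together with bijectivity of $\Phi$ on $H_n$ to identify $\Phi^{\dagger}(C^{-1}WC^{-1})$ with $\Phi^{-1}(W)$; this avoids the operator inequalities entirely at the price of needing the adjoint formalism and the invertibility of the extended map. Your $(2)\Rightarrow(3)$ step is also mildly different in execution: you obtain positive homogeneity from the equality case of Jensen's inequality applied to the eigenvalues of $\phi(B)^{-1/2}\phi(cB)\phi(B)^{-1/2}$ rather than from Minkowski's determinant inequality, and you read off the trace from the $s^{n-1}$ coefficient of the polynomial $\det(A+sB)$ rather than from the derivative-of-determinant formula of Lemma \ref{diff}; these are equivalent computations. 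Both approaches terminate in the same classification of unital Jordan $*$-automorphisms of $M_n$, and your reduction of $U$ to a unitary via the centrality of $U^{*}U$ is sound.
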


%
%\begin{theorem}\label{det1}
%
%
%
%Let $\phi: P_n\rightarrow P_n$ be a map. Then the following statements are equivalent.
%\begin{itemize}
%\item[(1)]
%\begin{equation}\label{1}
%\det(A+B)=\det(\phi(A)+\phi(B)), \quad \quad A,B \in P_n.
%\end{equation}
%\item[(2)] $\det(\phi(I))=1$ and $\tr(AB^{-1}) =  \tr(\phi(A){\phi(B)}^{-1}), \quad \quad A,B \in P_n.$
%%\item[(3)] $\det(\phi(I))=1$ and $\tr(AB) =  \tr(\phi(A)\phi(B)), \quad \quad A,B \in P_n.$
%\item[(3)] there exists an invertible matrix $M\in M_n$ with $\det (M^*M)=1$ such that $\phi$ is either
%\begin{equation*}%\label{UAU}
%\phi(A)=M^{*}AM   \quad  \quad  \forall \ A\in P_n \ \text{or}  \quad    \quad    \phi(A)=M^{*}A^{t}M  \quad \quad \forall \ A\in P_n.
%\end{equation*}
%\end{itemize}
%
%
%
%\end{theorem}
%
%
%
%

%
%
%
%
%\begin{theorem}\label{det1b}
%
%
%
%Let $\phi: P_n\rightarrow P_n$ be a unital map. Then the following statements are equivalent.
%\begin{itemize}
%\item[(1)]
%\begin{equation*}
%\det(A+B)=\det(\phi(A)+\phi(B)), \quad \quad A,B \in P_n.
%\end{equation*}
%\item[(2)] $\tr(AB^{-1}) =  \tr(\phi(A){\phi(B)}^{-1}), \quad \quad A,B \in P_n.$
%\item[(3)] $\tr(AB) =  \tr(\phi(A)\phi(B)), \quad \quad A,B \in P_n.$
%\item[(4)] there exists a unitary matrix $U\in M_n$ such that $\phi$ is either
%\begin{equation*}%\label{UAU}
%\phi(A)=U^{*}AU   \quad  \quad  \forall \ A\in P_n \ \text{or}  \quad    \quad    \phi(A)=U^{*}A^{T}U  \quad \quad \forall \ A\in P_n.
%\end{equation*}
%\end{itemize}
%
%
%
%\end{theorem}
%
%

\begin{proof} It is easy to see that (1) $\Rightarrow$ (2) and (1) $\Rightarrow$ (3).

(2) $\Rightarrow$ (1): %Choosing $A=B=I$ in \eqref{1}, then we have $\det(\phi(I))=\frac{1}{2^n}\det(2\phi(I))=\frac{1}{2^n}\det(2I)=1$.
Define $\psi: P_n \rightarrow P_n$ by $\psi(A)=\phi(I)^{-1/2}\phi(A)\phi(I)^{-1/2}$. Then we can see that $\psi$ is unital and $\det(\psi(A)+\psi(B))=\frac{1}{\det\phi(I)}\det(\phi(A)+\phi(B))=\det(A+B)$. Hence by (2) $\Rightarrow$ (1) of Lemma \ref{traceAB3} (see below), there exists a unitary matrix $U\in M_n$ such that $\psi$ is either $\psi(A)=U^{*}AU$ or $\psi(A)=U^{*}A^{t}U$. Hence $\phi$ is either $\phi(A)=\alpha M^{*}AM$ or $\phi(A)=\alpha M^{*}A^{t}M$, where $M=\alpha^{-1/2} U\phi(I)^{1/2}$ and $\det(M^*M)=\det(\alpha^{-1}\phi(I))=\alpha^{-n}\det(\phi(I))=1$.
%we have $\tr(\psi(A)\cdot {\psi(B)}^{-1})=\tr(AB^{-1})$. Thus
%$$\tr(\phi(A){\phi(B)}^{-1})=\tr(\phi(I)^{1/2}\psi(A){\psi(B)}^{-1}\phi(I)^{-1/2})
%=\tr(\psi(A){\psi(B)}^{-1}\phi(I)^{-1/2}\phi(I)^{1/2})=\tr(AB^{-1}).$$

(3) $\Rightarrow$ (1):  Define $\psi: P_n \rightarrow P_n$ by $\psi(A)=\phi(I)^{-1/2}\phi(A)\phi(I)^{-1/2}$. Then we can see that $\psi$ is unital and
\begin{align*}
\tr(\psi(A)\cdot {\psi(B)}^{-1}) = & \tr( \phi(I)^{-1/2}\phi(A)\phi(B)^{-1}\phi(I)^{1/2})\\
= & \tr( \phi(A)\phi(B)^{-1}\phi(I)^{1/2}\phi(I)^{-1/2})\\
= & \tr(\phi(A)\phi(B)^{-1})=\tr(AB^{-1})
\end{align*}
for $A,B \in P_n$. Hence by Lemma \ref{traceAB3} again, there exists a unitary matrix $U\in M_n$ such that $\psi$ is either $\psi(A)=U^{*}AU$ or $\psi(A)=U^{*}A^{t}U$. Similar to the proof (2) $\Rightarrow$ (1), our assertion follows.  \qedhere  \end{proof}

Now we present  Lemma \ref{traceAB3}, which plays an important role in the proof of Theorem \ref{detP1}.

\begin{lemma}\label{traceAB3}
Let $\phi: P_n\rightarrow P_n$ be a unital map. Then the following statements are equivalent.
\begin{itemize}
\item[(1)] There exists a unitary matrix $U\in M_n$ such that $\phi$ is either
\begin{equation*}%\label{UAU}
\phi(A)=U^{*}AU   \quad  \quad  \forall \ A\in P_n \quad    \quad  \text{or}  \quad    \quad    \phi(A)=U^{*}A^{t}U  \quad \quad \forall \ A\in P_n.
\end{equation*}
\item[(2)] $\det(\phi(A)+\phi(B))=\det(A+B), \quad \quad A,B \in P_n.$
\item[(3)] $\tr(\phi(A){\phi(B)}^{-1})=\tr(AB^{-1}), \quad \quad A,B \in P_n.$
\item[(4)] $\tr(\phi(A)\phi(B))=\tr(AB), \quad \quad A,B \in P_n.$
\end{itemize}

%
%{\color{red}{( Does this lemma hold if $P_n$ is replaced by $B(H)^+_{-1}$?)}}
%

\end{lemma}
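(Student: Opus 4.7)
The implications (1) $\Rightarrow$ (2), (3), (4) follow by direct substitution, so my plan is to close the loop by establishing (2) $\Rightarrow$ (3), (3) $\Rightarrow$ (1), and (4) $\Rightarrow$ (1). The main technical content lies in (2) $\Rightarrow$ (3), which converts a scalar determinantal identity into a trace identity via the equality case of Maclaurin's inequality; the remaining two implications will then follow from the linearity machinery of Lemma \ref{homoPn} combined with Wigner's unitary-antiunitary theorem.

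For \emph{(2) $\Rightarrow$ (3)}, I would proceed in four steps. \emph{(i)} Setting $A = B = sI$ and $A = I, B = sI$ in (2) (using $\phi(I) = I$) yields $\det(\phi(sI)) = s^n$ and $\det(I + \phi(sI)) = (1+s)^n$. If $d_1, \ldots, d_n > 0$ are the eigenvalues of $\phi(sI)$, Maclaurin's inequality gives $e_k(d) \ge \binom{n}{k} s^k$, so $\prod(1 + d_i) = \sum_k e_k(d) \ge (1 + s)^n$ with equality iff all $d_i = s$; hence $\phi(sI) = sI$. \emph{(ii)} Taking $B = sI$ in (2) then produces the polynomial identity $\det(\phi(A) + sI) = \det(A + sI)$, so $\phi(A)$ and $A$ share the same characteristic polynomial, and in particular $\det(\phi(A)) = \det(A)$. \emph{(iii)} (Homogeneity.) Setting $A = \lambda B$ in (2) and dividing through by $\det(\phi(\lambda B)) = \lambda^n \det(B)$ gives $\det(I + \phi(\lambda B)^{-1}\phi(B)) = (1 + 1/\lambda)^n$ together with $\det(\phi(\lambda B)^{-1}\phi(B)) = \lambda^{-n}$; since $\phi(\lambda B)^{-1}\phi(B)$ is similar to $\phi(\lambda B)^{-1/2}\phi(B)\phi(\lambda B)^{-1/2} \in P_n$, it is diagonalizable with positive eigenvalues, and the same Maclaurin argument forces every eigenvalue to equal $1/\lambda$, whence $\phi(\lambda B)^{-1}\phi(B) = \lambda^{-1} I$, i.e., $\phi(\lambda B) = \lambda \phi(B)$. \emph{(iv)} With homogeneity in hand, (2) rewrites as $\det(\phi(A) + \lambda\phi(B)) = \det(A + \lambda B)$ for all $\lambda > 0$ and, by polynomiality in $\lambda$, for all $\lambda \in \C$; equating the coefficients of $\lambda^{n-1}$ (namely $\det(\phi(B))\tr(\phi(A)\phi(B)^{-1})$ on the left and $\det(B)\tr(AB^{-1})$ on the right, with $\det(\phi(B)) = \det(B)$) and cancelling $\det(B)$ yields (3).

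For \emph{(3) $\Rightarrow$ (1)} and \emph{(4) $\Rightarrow$ (1)}, Lemma \ref{homoPn} shows that $\phi$ is additive, $\R^+$-homogeneous, and injective, so it extends to a real-linear bijection $\tilde\phi : H_n \to H_n$ with $\tilde\phi(I) = I$. Under (4), the identity $\tr(\tilde\phi(A)\tilde\phi(B)) = \tr(AB)$ extends to all $A, B \in H_n$ by bilinearity. Under (3), substituting $B = I + tC$ with $C \in H_n$ and $t > 0$ small enough that $B \in P_n$, expanding $\phi(B)^{-1} = I - t\tilde\phi(C) + O(t^2)$, and comparing the $O(t)$ coefficients in (3) yields the same inner-product identity $\tr(\tilde\phi(A)\tilde\phi(C)) = \tr(AC)$ for all $A, C \in H_n$. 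In either case $\tilde\phi$ is a unital real-linear isometry of $(H_n, \tr(\cdot\,\cdot))$ that sends $P_n$ into itself, and hence (by self-duality of $P_n$) bijectively onto $P_n$; in particular $\tilde\phi$ restricts to a bijection of the rank-one projections that preserves $\tr(PQ)$. Wigner's unitary-antiunitary theorem then provides a unitary $U$ with $\tilde\phi(P) = U^*PU$ or $\tilde\phi(P) = U^*P^tU$ for every rank-one $P$, and linearity propagates this form to all of $H_n$; restricting back to $P_n$ gives (1).

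The main obstacle is step \emph{(iii)}: promoting a single scalar determinantal equation to the matrix identity $\phi(\lambda B) = \lambda \phi(B)$ in the absence of any continuity or surjectivity hypothesis on $\phi$. The key observation is that the hypothesis simultaneously controls $\prod \tau_i$ and $\prod(1 + \tau_i)$ for the eigenvalues $\tau_i$ of $\phi(\lambda B)^{-1}\phi(B)$; the equality case of Maclaurin's inequality then pins down every $\tau_i = 1/\lambda$, and diagonalizability upgrades this spectral fact to a matrix identity. Once step \emph{(iii)} is in place, the polynomial expansion in step \emph{(iv)} and the subsequent reduction via Lemma \ref{homoPn} and Wigner's theorem are routine.
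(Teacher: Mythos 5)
Your proposal is correct and the logical cycle closes (you prove $(2)\Rightarrow(3)\Rightarrow(1)\Rightarrow(2)$ and $(1)\Leftrightarrow(4)$), but the route differs from the paper's in a substantive way. For $(2)\Rightarrow(3)$ the two arguments are close cousins: the paper derives positive homogeneity from the equality case of Minkowski's determinant inequality (Lemma \ref{equality}) and then extracts the trace by differentiating $\det(tA+(1-t)B)$ at $t=0^+$ via Golberg's formula (Lemma \ref{diff}), whereas you derive homogeneity from the equality case of Maclaurin's inequality and extract the trace by comparing $\lambda^{n-1}$-coefficients of the polynomial identity $\det(\phi(A)+\lambda\phi(B))=\det(A+\lambda B)$; these are interchangeable, and your step \emph{(ii)} (equality of characteristic polynomials of $A$ and $\phi(A)$) is a pleasant extra the paper does not record. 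The real divergence is in $(3)\Rightarrow(1)$ and $(4)\Rightarrow(1)$: the paper extends $\phi$ to a unital positive linear map on $M_n$ and feeds it to Lemma \ref{traceAB2}, whose engine is Choi's and Kadison's operator inequalities followed by the Jordan $*$-isomorphism classification; you instead extend to a unital real-linear isometry of $(H_n,\tr(\pont\,\pont))$, use self-duality of the positive cone to see that $\widetilde\phi$ and $\widetilde\phi^{-1}$ both preserve $M_n^{+}$, and invoke Wigner's theorem on rank-one projections. Your first-order expansion of $\phi(I+tC)^{-1}$ to linearize condition (3) neatly replaces the paper's Choi-inequality step. What your approach buys is the avoidance of operator inequalities; what it costs is the importation of Wigner's theorem and one small gap in exposition: the claim that $\widetilde\phi$ ``restricts to a bijection of the rank-one projections'' needs a line of justification, since rank-one projections lie on the boundary of the cone --- one should note that $\widetilde\phi^{\pm1}(M_n^{+})\subseteq M_n^{+}$ by self-duality and that within $M_n^{+}$ the conditions $\tr(X)=\tr(X^2)=1$ characterize rank-one projections, so $\widetilde\phi$ and $\widetilde\phi^{-1}$ both map this set to itself. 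With that sentence added, the proof is complete.
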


To prove Lemma \ref{traceAB3} we need some auxiliary lemmas. Firstly, we need the well-known Minkowski's determinant
inequality ({\cite[Theorem 7.8.8]{Horn85}}).

\begin{lemma}\label{Min}
Let $A$, $B\in P_n$. Then
\begin{equation*}
[\det(A+B)]^{1/n}\geq (\det A)^{1/n}+(\det B)^{1/n}, \quad \quad A,B \in P_n,
\end{equation*}
and the equality holds if and only if $B=\lambda A$ for some $\lambda>0$.
\end{lemma}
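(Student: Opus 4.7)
The plan is to reduce the matrix inequality to a purely scalar one via simultaneous diagonalization, and then invoke the arithmetic--geometric mean inequality in a slightly clever way.

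I would begin by using the factorization $A=C^*C$ with $C$ invertible (available because $A\in P_n$), so that
\begin{equation*}
\det(A+B)=\det(A)\det(I_n+D),\qquad D:=C^{-*}BC^{-1}\in P_n,
\end{equation*}
and similarly $\det(B)=\det(A)\det(D)$. Denoting by $\mu_1,\ldots,\mu_n>0$ the eigenvalues of $D$, and dividing the desired inequality through by $(\det A)^{1/n}>0$, the problem reduces to proving the scalar inequality
\begin{equation*}
\prod_{i=1}^n(1+\mu_i)^{1/n}\ \geq\ 1+\prod_{i=1}^n\mu_i^{1/n}.
\end{equation*}

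To establish this, I would apply AM--GM separately to the two families of positive reals $\{1/(1+\mu_i)\}_{i=1}^n$ and $\{\mu_i/(1+\mu_i)\}_{i=1}^n$, and then add the two resulting estimates. This gives
\begin{equation*}
\prod_{i=1}^n\left(\frac{1}{1+\mu_i}\right)^{1/n}+\prod_{i=1}^n\left(\frac{\mu_i}{1+\mu_i}\right)^{1/n}\leq\frac{1}{n}\sum_{i=1}^n\frac{1+\mu_i}{1+\mu_i}=1,
\end{equation*}
and multiplying through by $\prod_{i=1}^n(1+\mu_i)^{1/n}>0$ yields exactly the target inequality.

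There is no serious obstacle in this approach; the argument is entirely classical, and the only creative step is the choice to split $1$ as $\frac{1}{1+\mu_i}+\frac{\mu_i}{1+\mu_i}$ before applying AM--GM. The only point that deserves some care is the equality analysis: equality in the summed estimate requires equality in each AM--GM, which forces $1/(1+\mu_1)=\cdots=1/(1+\mu_n)$, hence $\mu_1=\cdots=\mu_n=:\lambda>0$. This means $D=\lambda I_n$, and conjugating back by $C^*$ and $C$ recovers $B=\lambda A$, completing the characterization of the equality case.
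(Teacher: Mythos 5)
Your proof is correct and complete. Note that the paper does not actually prove this lemma --- it simply quotes Minkowski's determinant inequality from Horn and Johnson \cite[Theorem 7.8.8]{Horn85} --- so there is no in-paper argument to compare against; what you have written is essentially the classical proof found in that reference (congruence by $C$ with $A=C^*C$ to reduce to $\det(I+D)^{1/n}\geq 1+(\det D)^{1/n}$, then the AM--GM splitting $1=\tfrac{1}{1+\mu_i}+\tfrac{\mu_i}{1+\mu_i}$), including the correct treatment of the equality case via equality in AM--GM forcing $D=\lambda I_n$. The only cosmetic omission is the trivial converse direction of the equality statement (that $B=\lambda A$ does give equality), which follows at once from $\det((1+\lambda)A)=(1+\lambda)^n\det A$.
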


Using Lemma \ref{Min} we can show that every transformation that preserves the determinant of the convex combination
with parameter $t=1/2$ also preserves it with an arbitrary parameter.

\begin{lemma}\label{equality}
Let $\phi: P_n\rightarrow P_n$ be a map such that
\begin{equation*}
\det(\phi(A)+\phi(B))=\det(A+B), \quad \quad A,B \in P_n.
\end{equation*}
Then $\phi(\lambda A)=\lambda\phi(A)$ for all $A\in P_n$, $\lambda >0$ and
\begin{equation*}
\det(t\phi(A)+(1-t)\phi(B))=\det(tA+(1-t)B) \quad \quad A,B \in P_n, \ 0\leq t\leq 1.
\end{equation*}
\end{lemma}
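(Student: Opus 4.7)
The plan is to extract homogeneity first and then use it to bootstrap from the $t=1/2$-style identity (which follows from the hypothesis after rescaling) to arbitrary convex combinations.

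First I would extract basic facts from the hypothesis. Setting $B=A$ gives $2^n\det\phi(A)=\det(2\phi(A))=\det(2A)=2^n\det(A)$, hence $\det\phi(A)=\det A$ for all $A\in P_n$. This equality will play a key role below.

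Next I would prove the homogeneity $\phi(\lambda A)=\lambda\phi(A)$ for $\lambda>0$ using Minkowski's determinant inequality (Lemma \ref{Min}). The idea is that the hypothesis preserves $\det(A+B)$, and when $B=\lambda A$ Minkowski's inequality is actually an equality, namely
\begin{equation*}
[\det(A+\lambda A)]^{1/n}=(1+\lambda)(\det A)^{1/n}=(\det A)^{1/n}+(\det(\lambda A))^{1/n}.
\end{equation*}
Applying the hypothesis together with $\det\phi(X)=\det X$, I get
\begin{equation*}
[\det(\phi(A)+\phi(\lambda A))]^{1/n}=[\det(A+\lambda A)]^{1/n}=(\det\phi(A))^{1/n}+(\det\phi(\lambda A))^{1/n},
\end{equation*}
so equality holds in Minkowski's inequality for the pair $(\phi(A),\phi(\lambda A))$. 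The equality clause of Lemma \ref{Min} then forces $\phi(\lambda A)=\mu\phi(A)$ for some $\mu>0$. Taking determinants of both sides and using $\det\phi(\lambda A)=\det(\lambda A)=\lambda^n\det A=\lambda^n\det\phi(A)$ gives $\mu^n=\lambda^n$, hence $\mu=\lambda$. This establishes $\phi(\lambda A)=\lambda\phi(A)$.

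Finally, for the convex combination identity, I would handle $t\in(0,1)$ by setting $s=t/(1-t)>0$ and using the just-proved homogeneity to rewrite
\begin{equation*}
\det(t\phi(A)+(1-t)\phi(B))=(1-t)^n\det(s\phi(A)+\phi(B))=(1-t)^n\det(\phi(sA)+\phi(B)),
\end{equation*}
and then apply the hypothesis to the pair $(sA,B)\in P_n\times P_n$, obtaining $(1-t)^n\det(sA+B)=\det(tA+(1-t)B)$. The endpoints $t=0$ and $t=1$ are covered by $\det\phi(A)=\det A$. I do not anticipate a genuine obstacle here: once homogeneity is secured, the convex-combination statement is essentially a trivial rescaling. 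The only subtle point is the first step, where one needs to recognize that Minkowski's equality case is exactly the tool to promote preservation of $\det(A+B)$ into positive homogeneity of $\phi$.
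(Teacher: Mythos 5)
Your proposal is correct and follows essentially the same route as the paper: take $A=B$ to get $\det\phi(A)=\det A$, invoke the equality case of Minkowski's determinant inequality to obtain $\phi(\lambda A)=\mu\phi(A)$, pin down $\mu=\lambda$ by taking determinants, and then reduce the convex combination identity to the hypothesis by factoring out a scalar. The only cosmetic difference is that you factor out $(1-t)^n$ where the paper factors out $t^n$.
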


\begin{proof}
Taking $A=B$,  one has $\det A=\det \phi(A)$. Hence by assumption, we have
\begin{align*}
   [\det(\phi(A)+\phi(\lambda A))]^{1/n}   &   = [\det(A+\lambda A)]^{1/n}\\
   & = (1+\lambda)[\det A]^{1/n}= [\det A]^{1/n}+ [\det\lambda A]^{1/n}\\
   & = [\det \phi(A)]^{1/n}+[\det \phi(\lambda A)]^{1/n}
\end{align*}
for any $\lambda>0$. Thus by Lemma \ref{Min}, we can get $\phi(\lambda A)=a \phi(A)$ for some $a>0$. Since $\det A=\det \phi(A)$, we obtain that
$$\lambda^n \det A = \det (\lambda A)= \det (\phi(\lambda A))=\det (a\phi(A))=a^n \det A$$
Thus $a=\lambda$ and hence $\phi$ satisfies $\phi(\lambda A)=\lambda \phi(A)$ for every $\lambda>0$. Therefore,
$$\det(\phi(A)+ \lambda\phi(B)) =\det(\phi (A)+ \phi(\lambda B))=\det(A+\lambda B)$$
for every $\lambda >0$. Hence for $0<t\leq 1$, $A,B\in P_n$,
\begin{align*}
\det(tA+(1-t)B)   &   = t^n\det\left(A+\left(\frac{1-t}{t}\right)B\right)   \\
& = t^n\det\left(\phi(A)+\frac{1-t}{t}\phi(B)\right)=\det(t\phi(A)+(1-t)\phi(B)).
\end{align*}  \qedhere    \end{proof}

Next, we recall a useful lemma which shows that the derivative of the determinant can be written with the help of trace.

\begin{lemma}[{\cite[Main Theorem]{Go72}}]\label{diff}
Let $U$ be an open subset of the reals. Let $A(t)$ be a differentiable matrix valued function on $U$. Then $d(t)\equiv \det A(t)$ is differentiable and satisfies
\begin{equation*}%\label{3}
d_t(t)=\tr[Adj A(t)\cdot A_t(t)], \ t\in U.
\end{equation*}
\end{lemma}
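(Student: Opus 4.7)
The plan is to derive this classical Jacobi formula directly from the Leibniz expansion of the determinant combined with the ordinary product rule. Writing $A(t)=(a_{ij}(t))$, I would begin from
\[
\det A(t) \;=\; \sum_{\sigma} \mathrm{sgn}(\sigma)\prod_{i=1}^n a_{i,\sigma(i)}(t),
\]
where the sum runs over all permutations $\sigma$ of $\{1,\ldots,n\}$. Since each $a_{ij}$ is differentiable on $U$ and $\det$ is a polynomial in the entries, $d(t)=\det A(t)$ is automatically differentiable, and applying the product rule to each monomial yields
\[
d'(t) \;=\; \sum_{k=1}^n\sum_{j=1}^n a'_{kj}(t)\,\cdot\, \left.\frac{\partial \det A}{\partial a_{kj}}\right|_{A=A(t)}.
\]

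Next I would identify the partial derivative $\partial \det A/\partial a_{kj}$ with the $(k,j)$-cofactor $C_{kj}(A)=(-1)^{k+j}M_{kj}(A)$, where $M_{kj}(A)$ is the determinant of the minor obtained by deleting the $k$th row and $j$th column. This identification is immediate from the Laplace expansion $\det A=\sum_{j} a_{kj}C_{kj}(A)$ along row $k$, since the cofactors on the right-hand side do not themselves involve $a_{kj}$.

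Finally, I would rewrite the resulting expression
\[
d'(t)\;=\;\sum_{k,j} C_{kj}(A(t))\,a'_{kj}(t)
\]
as the stated trace. Since the adjugate is by definition the transpose of the cofactor matrix, one has $(\mathrm{Adj}\,A)_{jk}=C_{kj}(A)$, and therefore
\[
\tr[\mathrm{Adj}(A(t))\cdot A'(t)] \;=\; \sum_{j,k} (\mathrm{Adj}\,A(t))_{jk}\,A'(t)_{kj} \;=\; \sum_{k,j} C_{kj}(A(t))\,a'_{kj}(t),
\]
which coincides with $d'(t)$. The computation is essentially routine, being just Leibniz's rule applied to a polynomial, so I do not anticipate any substantive obstacle. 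The only point requiring a bit of care is the index bookkeeping between the cofactor matrix and its transpose (the adjugate), so that the trace pairs the row and column indices correctly; once that is settled, the identity falls out immediately.
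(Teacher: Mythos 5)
Your proposal is correct and complete: the Leibniz expansion plus the product rule, the identification $\partial\det A/\partial a_{kj}=C_{kj}(A)$ via Laplace expansion along row $k$, and the transpose bookkeeping $(\mathrm{Adj}\,A)_{jk}=C_{kj}(A)$ together give exactly the stated trace formula. The paper itself offers no proof of this lemma --- it is quoted verbatim from Golberg's Monthly note --- so there is nothing to compare against; your argument is the standard one and correctly supplies the missing details.
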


In the following lemma we collect some trace-equalities that are equivalent.

\begin{lemma}\label{traceAB2}
Let $\phi: M_n\rightarrow M_n$ be an unital linear map and $\phi(P_n)\subseteq P_n$. Then the following statements are equivalent.

%{\color{red}{(Could we omitted this assumption when $\dim H=+\infty$ or just assume $\phi$ is surjective? )}}

\begin{itemize}
\item[(1)]  There exists a unitary matrix $U\in M_n$ such that $\phi$ is either
\begin{equation*}
\phi(A)=U^{*}AU   \quad  \quad  \forall \ A\in P_n \quad    \quad  \text{or}  \quad    \quad    \phi(A)=U^{*}A^{t}U  \quad \quad \forall \ A\in P_n.
\end{equation*}
\item[(2)] $\tr(\phi(A){\phi(B)}^{-1})=\tr(AB^{-1}), \quad \quad A,B \in P_n.$
\item[(3)] $\tr(\phi(A)^2)=\tr(A^2), \quad \quad A \in P_n.$
\item[(4)] $\tr(\phi(A)\phi(B))=\tr(AB), \quad \quad A,B \in P_n.$
\end{itemize}

\end{lemma}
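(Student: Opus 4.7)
The plan is to prove the cycle (1) $\Rightarrow$ (2), (3), (4) $\Rightarrow$ (1), handling the forward direction directly and then closing via (2) $\Rightarrow$ (4), (3) $\Leftrightarrow$ (4), and finally (4) $\Rightarrow$ (1). For the forward direction, if $\phi(A)=U^{*}AU$ or $\phi(A)=U^{*}A^{t}U$ with $U$ unitary, each of (2), (3), (4) follows by cyclic invariance of trace together with $\tr(X^{t})=\tr(X)$ and $(X^{t})^{-1}=(X^{-1})^{t}$.

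The equivalence (3) $\Leftrightarrow$ (4) comes from polarization: since $P_n$ is closed under addition, applying (3) to $A$, $B$, and $A+B$ and expanding $\tr(\phi(A+B)^{2})=\tr((A+B)^{2})$ via linearity of $\phi$ yields $\tr(\phi(A)\phi(B))=\tr(AB)$, while the reverse is immediate from $A=B$. For (2) $\Rightarrow$ (4), I would substitute $B=I+tC$ with $C\in H_n$ and $t>0$ small into (2), expand $\phi(I+tC)^{-1}=I-t\phi(C)+O(t^{2})$ and $(I+tC)^{-1}=I-tC+O(t^{2})$, and equate the coefficient of $t$ to obtain $\tr(\phi(A)\phi(C))=\tr(AC)$ for all $A,C\in H_n$, in particular for $A,C\in P_n$.

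The substance is in (4) $\Rightarrow$ (1). First I would extend (4) from $P_n$ to $H_n$ using the linearity of $\phi$ and the fact that $H_n=P_n-P_n$. Setting $B=I$ gives $\tr(\phi(A))=\tr(A)$, so $\phi|_{H_n}$ is a real linear isometry of $H_n$ for the inner product $(A,B)\mapsto\tr(AB)$ and hence bijective on $H_n$. For any rank-one projection $P$, $\phi(P)\in \overline{P_n}$, $\tr(\phi(P))=1$, and $\tr(\phi(P)^{2})=1$; since its non-negative eigenvalues sum to $1$ with squares summing to $1$, each $\lambda_i$ must satisfy $\lambda_i(\lambda_i-1)=0$, forcing $\phi(P)$ to be a rank-one projection. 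For surjectivity onto rank-one projections, given a rank-one projection $Q$ I would take $A\in H_n$ with $\phi(A)=Q$ and note that for every $R\in P_n$, $\tr(AR)=\tr(Q\phi(R))\geq 0$ because $\phi(R)\in P_n$, so $A$ is PSD by self-duality of the PSD cone; the same eigenvalue argument then applied to $A$ with $\tr(A)=1=\tr(A^{2})$ forces $A$ itself to be a rank-one projection.

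Thus $\phi$ restricts to a bijection on the set of rank-one projections preserving transition probabilities $\tr(PQ)$, so Wigner's theorem produces a unitary $U$ with $\phi(P)=U^{*}PU$ or $\phi(P)=U^{*}P^{t}U$ for every rank-one projection $P$. Since rank-one projections span $M_n$ over $\mathbb{C}$ and $\phi$ is complex linear, the identity extends to all of $M_n$, and in particular to $P_n$. The main obstacle I expect is the surjectivity step onto rank-one projections, which relies on the dual-positivity argument above, together with the careful application of Wigner's theorem to separate the unitary and antiunitary cases that correspond respectively to $A\mapsto U^{*}AU$ and $A\mapsto U^{*}A^{t}U$.
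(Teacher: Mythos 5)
Your proposal is correct, but for the two substantive implications it takes a genuinely different route from the paper. For the step starting from the trace--inverse condition, the paper proves (2) $\Rightarrow$ (3) using operator inequalities: Choi's inequality $\phi(A)^{-1}\leq\phi(A^{-1})$ gives $\tr(A^2)\leq\tr(\phi(A)^2)$, and Kadison's inequality $\phi(A)^2\leq\phi(A^2)$ gives the reverse bound; you instead prove (2) $\Rightarrow$ (4) directly by substituting $B=I+tC$ and matching first-order terms in $t$, which exploits the linearity hypothesis and avoids the operator inequalities altogether. For (4) $\Rightarrow$ (1), the paper again uses Kadison's inequality to force $\phi(A^2)=\phi(A)^2$ on $P_n$, extends this to $H_n$ and then $M_n$ by polarization, verifies $\phi(A^*)=\phi(A)^*$, and concludes that $\phi$ is a surjective Jordan $*$-isomorphism of $M_n$, whence the two standard forms; you instead show $\phi$ is a real-linear isometry of $H_n$ for the trace inner product that maps rank-one projections bijectively onto rank-one projections (via the eigenvalue argument from $\tr(\phi(P))=\tr(\phi(P)^2)=1$ and the self-duality of the positive semidefinite cone for surjectivity), and then invoke Wigner's theorem --- which the paper itself quotes in its introduction --- before extending from the real span of the projections to $M_n$ by complex linearity. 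Both arguments are sound; the paper's buys a self-contained algebraic conclusion through the classification of Jordan $*$-isomorphisms, while yours trades that machinery for Wigner's theorem and a cleaner perturbative derivation of the trace identity. Two small points worth making explicit in a final write-up: the fact that $\phi(P)$ is positive semidefinite for a projection $P$ requires the continuity of the linear map $\phi$ to pass from $\phi(P_n)\subseteq P_n$ to $\phi(\overline{P_n})\subseteq\overline{P_n}$, and in the eigenvalue step the bound $\lambda_i\leq 1$ (needed to conclude $\lambda_i(1-\lambda_i)\geq 0$ termwise) comes from $\sum_i\lambda_i=1$ with $\lambda_i\geq 0$.
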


\begin{proof} Since $\phi(P_n)\subseteq P_n$ and $\phi$ is linear on $M_n$, we know that $\phi$ is continuous and hence $\phi(M_n^{+})\subseteq \phi(M_n^{+})$, $\phi(H_n)\subseteq \phi(H_n)$.

(1) $\Rightarrow$ (2): It is trivial.

(2) $\Rightarrow$ (3): %By Lemma \ref{homo}, we have that $\phi$ is linear.
Take $B=I$ in the assumption. Then we have $\tr(A)=\tr(\phi(A))$ for any $A\in P_n$. Note that for any $A\in P_n$, by Choi's inequality ({\cite[Corollary 2.3]{Ch74}}), $\phi(A)^{-1}\leq \phi(A^{-1})$. That is, $\phi(A^{-1})^{-1}\leq \phi(A)$. Hence
$$\tr(A^2) =  \tr(A(A^{-1})^{-1})= \tr(\phi(A)\phi(A^{-1})^{-1})=   \tr(\phi(A)^{1/2}\phi(A^{-1})^{-1}\phi(A)^{1/2})\leq \tr(\phi(A)^2).$$
By Kadison's inequality ({\cite[p. 495]{Ka52}}), $\phi(A)^2\leq \phi(A^2)$. Therefore, we have
$$\tr(A^2) \leq \tr(\phi(A)^2)\leq \tr(\phi(A^2))=\tr(A^2).$$
This implies that  $\tr(\phi(A)^2)=\tr(A^2)$.

(3) $\Rightarrow$ (4): For any $A, B\in P_n$, $\tr((A+B)^2)=\tr(A^2)+\tr(B^2)+2\tr(AB)$ and $\tr((\phi(A)+\phi(B))^2)=\tr(\phi(A)^2)+\tr(\phi(B)^2)+2\tr(\phi(A)\phi(B))$. Hence by assumption, we have $\tr(AB)=\tr(\phi(A)\phi(B))$.

(4) $\Rightarrow$ (1): {Taking $B=I$ we have that $\tr(\phi(A))=\tr (A)$.} Since $\phi(M_n^{+})\subseteq \phi(M_n^{+})$, by Kadison's inequality, $\phi(A)^2\leq \phi(A^2)$ for any $A>0$. Hence $$\tr(A^2)=\tr(\phi(A)^2)\leq \tr(\phi(A^2))=\tr(A^2).$$
Thus $\tr(\phi(A^2)-\phi(A)^2)=0$. By the inequality $\phi(A^2)\geq \phi(A)^2$ again, this implies that
$\phi(A^2)=\phi(A)^2$ for any $A>0$. For any $A\in H_n$, there exists $\epsilon>0$ such that $A+\epsilon I>0$ and hence $\phi((A+\epsilon I)^2)=\phi(A+\epsilon I)^2$. This implies that $\phi(A^2)=\phi(A)^2$. Now for any $A=A_1+iA_2\in M_n$ for some $A_1, A_2\in H_n$, we have $\phi (A_1+A_2)^2=\phi((A_1+A_2)^2)=\phi(A_1^2+A_2^2+A_1A_2+A_2A_1)$. This implies that $\phi(A_1A_2)+\phi(A_2A_1)=\phi (A_1)\phi(A_2)+\phi (A_2)\phi(A_1)$. Hence $\phi((A_1+iA_2)^2)=\phi(A_1^2)-\phi (A_2^2)+i\phi(A_1A_2+A_2A_1)=\phi(A_1)^2-\phi (A_2)^2+i\phi (A_1)\phi(A_2)+i\phi (A_2)\phi(A_1) =(\phi(A_1)+i\phi(A_2))^2=\phi(A_1+iA_2)^2$. In addition, since $\phi(H_n)\subseteq \phi(H_n)$, we have $\phi(A^{*})=\phi(A_1)-i\phi(A_2)=(\phi(A_1)+i\phi(A_2))^*=\phi(A)^*$. Hence $\phi$ is a Jordan $*$-homomorphism.

Moreover, by the proof of Lemma \ref{homoPn}, we can see that the complex span of $\{\phi(S_i): 1\leq i\leq n^2\}$ {is} $M_n$, where $\{S_i: 1\leq i\leq n^2\}$ is a fixed maximal real linearly independent subset  of $P_n$. It means that $\phi$ is surjective and hence injective since $\phi$ is linear on $M_n$. Therefore, $\phi$ is a Jordan $*$-isomorphism on $M_n$. Hence $\phi(A)=UAU^*$ for all $A\in P_n$ or $\phi(A)=UA^{T}U^*$ for all $A\in P_n$.    \qedhere  \end{proof}

\begin{remark}\label{remark1}
The assumption ``$\phi$ is linear" of Lemma \ref{traceAB2} is necessary for the condition (3). In fact, we can define the map $\phi: M_n\rightarrow M_n$ by $\phi(A)=U(s)AU(s)^*$ for all $A\in M_n$ with $\|A\|=s$, where $U(s)$ is a unitary matrix depending on the norm of $A$. Thus it is easy to check that $\phi(P_n)\subseteq P_n$ and $\tr(\phi(A)^2)=\tr(A^2)$. However, $\phi$ is not linear and doesn't satisfy the condition (1) in Lemma \ref{traceAB2}.
\end{remark}

Now, we are in a position to prove Lemma \ref{traceAB3}.

\begin{proof}[Proof of Lemma \ref{traceAB3}] It is easy to see (1) $\Rightarrow$ (2).

(2) $\Rightarrow$ (3): By Lemma \ref{equality}, we have for any $A,B \in P_n, \ 0\leq t\leq 1$
$$\det(tA+(1-t)B)=\det(t\phi(A)+(1-t)\phi(B)).$$
Hence Lemma \ref{diff} implies that
\begin{align}
  & \frac{d}{dt}\det(tA+(1-t)B)|_{t=0^+} =\frac{d}{dt}\det(B+t(A-B))|_{t=0^+}\\
  = & \tr[(Adj(B)+t(A-B))(A-B)]|_{t=0^+}=\tr[Adj(B)(A-B)]\\
  = & \tr(A\cdot Adj(B))-\tr(B\cdot Adj(B))= \tr(A\cdot Adj(B))-\tr[\det B\cdot I_n]\\
  = & \tr(A\cdot Adj(B))-n\cdot\det B.    \label{5}
\end{align}
Similarly, we have
\begin{align}\label{6}
  \frac{d}{dt}\det(t\phi(A)+(1-t)\phi(B))|_{t=0^+} & = \tr(\phi(A)\cdot Adj(\phi(B)))-n\cdot\det \phi(B).
\end{align}
Choosing $A=B$ in \eqref{1}, then we obtain $\det(\phi(B))=\det(B)$ for all $B\in P_n$. Hence by \eqref{5}, \eqref{6} and assumption, the equality
\begin{align*}%\label{7}
  \tr(A\cdot Adj(B)) = \tr(\phi(A)\cdot Adj(\phi(B)))        \quad \quad A,B \in P_n
\end{align*}
holds. Hence for all $A, B\in P_n$, we have
\begin{align*}%\label{8}
  \tr(A\cdot B^{-1}) = \tr\left(A\cdot \frac{Adj(B)}{\det(B)}\right) = \tr\left(\phi(A)\cdot \frac{Adj(\phi(B))}{\det(\phi(B))}\right)=  \tr(\phi(A)\cdot {\phi(B)}^{-1}).
\end{align*}

(3) $\Rightarrow$ (4): By Lemma \ref{homoPn}, we can obtain that $\phi$ is additive and positive homogeneous, that is, $\phi(A+B)=\phi(A)+\phi(B)$ and $\phi(\lambda A)=\lambda \phi(A)$ for all $A, B\in P_n$, $\lambda >0$. Firstly, for any $A\geq 0$, define $\phi_1: M_n^{+} \rightarrow M_n^{+}$ by $\phi_1(A)=\lim\limits_{\epsilon\rightarrow 0^{+}}\phi(A+\epsilon I)$. Then we can see that $\phi_1$ is well-defined, $\phi_1(A+B)=\phi_1(A)+\phi_1(B)$ and $\phi_1(\lambda A)=\lambda \phi_1(A)$ for all $A, B\in M_n^{+}$, $\lambda\geq 0$. Secondly, for any $A\in M_n$, there exist $A_1, A_2, A_3, A_4\geq0$ such that $A=A_1-A_2+iA_3-iA_4$. Then we can define $\phi_2: M_n \rightarrow M_n$ by $\phi_2(A)=\phi_1(A_1)-\phi_1(A_2)+i\phi_1(A_3)-i\phi_1(A_4)$. Then one can check that $\phi_2$ is well-defined, linear, unital and $\phi_2(A)=\phi(A)$ for any $A\in P_n$.
%In the proof of Lemma \ref{homo}, we can see that $\{\phi_2(S_{jk}): 1\leq j, k\leq n\}$ are linearly independent and hence the complex span of $\{\phi_2(S_{jk}): 1\leq j, k\leq n\}$ are $M_n$. It means that $\phi_2$ is surjective.
Hence our assertion follows from (2)$\Rightarrow$ (4) of Lemma \ref{traceAB2}.

(4) $\Rightarrow$ (1): By applying Lemma \ref{homoPn} and the similar proof (3) $\Rightarrow$ (4) in this lemma, we can define a unital linear map $\phi_3: M_n \rightarrow M_n$ satisfying $\phi_3(A)=\phi(A)$ for any $A\in P_n$.
%In the proof of Lemma \ref{homo}, we can see that $\{\phi_2(S_{jk}): 1\leq j, k\leq n\}$ are linearly independent and hence the complex span of $\{\phi_2(S_{jk}): 1\leq j, k\leq n\}$ are $M_n$. It means that $\phi_2$ is surjective.
Hence by (4)$\Rightarrow$ (1) of Lemma \ref{traceAB2}, there exists a unitary matrix $U\in M_n$ such that $\phi$ is either
\begin{equation*}
\phi(A)=U^{*}AU   \quad  \quad  \forall \ A\in P_n \quad    \quad  \text{or}  \quad    \quad    \phi(A)=U^{*}A^{t}U  \quad \quad \forall \ A\in P_n.
\end{equation*}   \qedhere  \end{proof}

\section{Maps  preserving determinants of convex combinations on $S_n$}

We consider the $S_n$ case in this section. In 2004, Cao and Tang studied this kind of problem and obtained some results in {\cite[Theorems 1.1 and 1.2]{Ca04}}. Now we add the trace equality and reformulate the result as follows.

\begin{theorem}\label{det4}

Let $\phi: S_n\rightarrow S_n$ be a map and  $\alpha^n=\det \phi(I)$. Then the following statements are equivalent.
\begin{itemize}
\item[(1)] There exists an invertible matrix $P\in S_n$ with $(\det P)^2=1$ such that $\phi$ is of the form
\begin{equation*}%\label{UAU}
\phi(A)=\alpha PAP^{t}   \quad  \quad  \forall \ A\in S_n.
\end{equation*}
\item[(2)] $\phi$ sends invertible matrices in $S_n$ into invertible matrices in $S_n$ and
\begin{equation*}%\label{UAU}
 \tr(\phi(A){\phi(B)}^{-1})=\tr(AB^{-1})
 \end{equation*}
 for all $A\in S_n$ and invertible matrix $B\in S_n$.
\item[(3)]
\begin{equation*}%\label{111}
\det(t\phi(A)+(1-t)\phi(B))=\alpha^n\det(tA+(1-t)B), \quad \quad A,B \in S_n, \ t\in [0,1]
\end{equation*}
\item[(4)] $\phi$ is surjective and there exist two specific $t\in [0,1]$ such that
\begin{equation*}%\label{111}
\det(t\phi(A)+(1-t)\phi(B))=\alpha^n\det(tA+(1-t)B), \quad \quad A,B \in S_n.
\end{equation*}

\end{itemize}

\end{theorem}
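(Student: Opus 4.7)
The plan is to close a web of implications: (1)$\Rightarrow$(2),(3),(4) routinely, (3)$\Rightarrow$(4) trivially, (3)$\Rightarrow$(2) by the differentiation argument in the proof of (2)$\Rightarrow$(3) of Lemma~\ref{traceAB3}, (2)$\Rightarrow$(3) by upgrading the trace identity to a scaled determinantal identity via Lemma~\ref{homo}, and (4)$\Rightarrow$(1) by invoking Cao and Tang's Theorems~1.1 and~1.2 of \cite{Ca04}. For (1)$\Rightarrow$(2),(3),(4) I would simply substitute and use $P=P^t$: $\phi(A)\phi(B)^{-1} = PAB^{-1}P^{-1}$ has trace $\tr(AB^{-1})$, while $t\phi(A)+(1-t)\phi(B) = \alpha P(tA+(1-t)B)P^t$ has determinant $\alpha^n(\det P)^2\det(tA+(1-t)B)=\alpha^n\det(tA+(1-t)B)$.

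For (3)$\Rightarrow$(2), I would first set $A=B$ in (3) to get $\det\phi(A)=\alpha^n\det A$, so $\alpha\neq 0$ and $\phi$ sends invertibles in $S_n$ to invertibles in $S_n$. I would then apply Lemma~\ref{diff} to both sides of (3) along the segment $B+t(A-B)$ at $t=0^+$ and cancel the matching $-n\det\phi(B)=-n\alpha^n\det B$ terms to arrive at
\begin{equation*}
\tr(\phi(A)\cdot\mathrm{Adj}(\phi(B))) \;=\; \alpha^n\,\tr(A\cdot\mathrm{Adj}(B)),\qquad A\in S_n,\ B\in S_n\text{ invertible},
\end{equation*}
after which dividing by $\det\phi(B)=\alpha^n\det B$ converts adjugates into inverses and produces the trace equality in (2).

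For (2)$\Rightarrow$(3), I would first invoke Lemma~\ref{homo}(1) to secure that $\phi$ is linear and bijective on $S_n$. Then, to pass from the trace equality to the scaled determinantal identity, fix $A,B\in S_n$ and set $f(s)=\det(B+sA)$, $g(s)=\det(\phi(B)+s\phi(A))=\det(\phi(B+sA))$ by linearity. The zero set of $s\mapsto\det(B+sA)$ in $\mathbb{C}$ is finite, so on its connected complement Lemma~\ref{diff} combined with the trace equality applied at the invertible $B+sA$ forces
\begin{equation*}
\frac{g'(s)}{g(s)} \;=\; \tr\!\bigl(\phi(A)\phi(B+sA)^{-1}\bigr) \;=\; \tr\!\bigl(A(B+sA)^{-1}\bigr) \;=\; \frac{f'(s)}{f(s)},
\end{equation*}
so $g/f$ is a single constant there. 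Applying this along the complex line $(1-s)I+sB$ joining $I$ to an arbitrary invertible $B\in S_n$ forces $\det\phi(B)/\det B = \det\phi(I) = \alpha^n$, and continuity extends the identity to all of $S_n$. Linearity then gives $\det(t\phi(A)+(1-t)\phi(B)) = \det(\phi(tA+(1-t)B)) = \alpha^n\det(tA+(1-t)B)$, i.e.\ (3).

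Finally, for (4)$\Rightarrow$(1) I would rescale $\widetilde\phi:=\alpha^{-1}\phi$, which remains surjective and satisfies the unscaled identity $\det(t\widetilde\phi(A)+(1-t)\widetilde\phi(B))=\det(tA+(1-t)B)$ at the two specified $t$-values; this is exactly the hypothesis of \cite[Theorems~1.1, 1.2]{Ca04}, whose conclusion $\widetilde\phi(A)=PAP^t$ with $(\det P)^2=1$ gives $\phi(A)=\alpha PAP^t$. The step I expect to demand the most care is the global constancy of $\det\phi/\det$ on invertibles in (2)$\Rightarrow$(3); should the complex-line connectedness argument prove awkward to write cleanly, a safe fallback is to test the identity $\tr(\phi(A)\cdot\mathrm{Adj}(\phi(B)))=\alpha^n\tr(A\cdot\mathrm{Adj}(B))$ against a spanning family of invertible $B\in S_n$ (built from the $B_j$'s produced in the proof of Lemma~\ref{homo}) in order to read off $\mathrm{Adj}\circ\phi$ algebraically, and thereby $\det\circ\phi = \alpha^n\det$, without appealing to connectedness at all.
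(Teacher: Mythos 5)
Your proposal is correct in substance but routes the one nontrivial implication differently from the paper. The paper, like you, disposes of (1) $\Leftrightarrow$ (3) and (1) $\Leftrightarrow$ (4) by citing Cao and Tang, but it proves (2) $\Rightarrow$ (1) \emph{directly}: it normalizes via $\psi(A)=Q^{-1}\phi(A)(Q^{-1})^{t}$ with $\phi(I)=QQ^{t}$ and then invokes Lemma \ref{traceSn4}, a self-contained structural lemma whose proof goes through $p(\phi(A))=\phi(p(A))$, the images of the matrix units $E_{ii}$ and $D_{ij}=E_{ij}+E_{ji}$, and a Jordan-homomorphism-type analysis. You instead prove (2) $\Rightarrow$ (3) by \emph{integrating} the trace identity: after Lemma \ref{homo} supplies $\mathbb{C}$-linearity and bijectivity, the logarithmic-derivative computation $g'/g=\tr(\phi(A)\phi(B+sA)^{-1})=\tr(A(B+sA)^{-1})=f'/f$ on the cofinite (hence connected) set of $s$ where $B+sA$ is invertible forces $\det\phi(B)=\alpha^{n}\det B$, and then everything is funneled into Cao--Tang. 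Your (3) $\Rightarrow$ (2) is essentially the paper's differentiation step from the proof of Lemma \ref{traceAB3}, and your (2) $\Rightarrow$ (3) is its legitimate reversal; the argument checks out, including the use of complex $s$ (condition (2) applies to every invertible $B+sA\in S_n$) and the continuity extension to singular matrices. What the paper's route buys is a structural, largely self-contained proof of (2) $\Rightarrow$ (1); what yours buys is economy, since Lemma \ref{traceSn4} becomes unnecessary, at the cost of leaning entirely on the cited determinant-preserver theorems for the final form of $\phi$.

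One small repair is needed in your web of implications: (3) $\Rightarrow$ (4) is not ``trivial,'' because (4) asserts that $\phi$ is surjective, and surjectivity does not follow formally from the determinant identity in (3). The fix is to read the citation as the paper does: Cao and Tang's Theorem 1.1 gives (3) $\Rightarrow$ (1) without any surjectivity hypothesis, and Theorem 1.2 gives (4) $\Rightarrow$ (1); then (4) follows from (1), whose conclusion is visibly bijective. With that rerouting, and the observation that $\alpha\neq0$ is automatic under (2) (since $\phi(I)$ must be invertible) and under (4) (by surjectivity combined with $\det\phi(A)=\alpha^{n}\det A$), your proof closes.
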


The implications (1) $\Leftrightarrow$ (3) and (1) $\Leftrightarrow$ (4) are slight modifications of Cao and Tang's results in {\cite[Theorems 1.1 and 1.2]{Ca04}}. Here we give another equivalent condition (1) $\Leftrightarrow$ (2) which is not trivial to see.

\begin{proof} It is easy to check (1) $\Rightarrow$ (2). We only need to prove (2) $\Rightarrow$ (1).

(2) $\Rightarrow$ (1): Since $\phi(I)\in S_n$, there exists an invertible matrix $Q\in S_n$ such that $\phi(I)=QQ^t$. Define $\psi: S_n \rightarrow P_n$ by $\psi(A)=Q^{-1}\phi(A)(Q^{-1})^t$. Then we can see that $\psi$ is unital and $\psi$ sends invertible matrices into invertible matrices. Moreover, $\tr(\psi(A){\psi(B)}^{-1})=\tr(\phi(A){\phi(B)}^{-1})=\tr(AB^{-1})$ for $A \in S_n$ and invertible  $B\in S_n$. By Lemma \ref{traceSn4} (see below), there exists an orthogonal matrix $P\in M_n$ such that $\psi(A)=PAP^t$ for $A\in S_n$. Hence $\phi(A)=Q\psi(A)Q^t=\alpha RAR^t$, where $R=\alpha^{-1/2}QP$ is invertible with $(\det R)^2=\alpha^{-n}\det(QQ^t)\det(P)^2=\alpha^{-n}\det\phi(I)=1$.  \qedhere  \end{proof}

For the proof of Theorem \ref{det4}, we should apply the following lemma.

\begin{lemma}\label{traceSn4}
Let $\phi: S_n\rightarrow S_n$ be an unital map sending invertible matrices into invertible matrices. Then the following statements are equivalent.

\begin{itemize}
\item[(1)]  There exists an orthogonal matrix $P\in M_n$ such that $\phi$ is of the form
\begin{equation*}
\phi(A)=PAP^{t}   \quad  \quad  \forall \ A\in S_n.
\end{equation*}
\item[(2)] $\tr(\phi(A){\phi(B)}^{-1})=\tr(AB^{-1}), \quad \quad A \in S_n$ and invertible $B\in S_n$.
\item[(3)] $\tr(\phi(A)\phi(B)^k)=\tr(AB^k), \quad \quad A, B \in S_n$ and $k\in \mathbb{N}$.
%\item[(3)] $\tr(ABC) =  \tr(\phi(A)\phi(B)\phi(C)), \quad \quad A, B, C \in S_n$.
%\item[(3)] $\phi$ is surjective, $\tr(A)=\tr(\phi(A))$ and $p(\phi(A))=\phi(p(A)), \quad A\in S_n$ and polynomial $p$.
\end{itemize}

\end{lemma}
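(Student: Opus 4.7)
My plan is to close the equivalence via the cycle (1) $\Rightarrow$ (2) $\Rightarrow$ (3) $\Rightarrow$ (1), with (1) $\Rightarrow$ (3) being an identical direct computation to (1) $\Rightarrow$ (2). The easy direction (1) $\Rightarrow$ (2) is immediate from $PP^t = I$: this gives $\phi(B)^{-1} = PB^{-1}P^t$ for any invertible symmetric $B$, so $\tr(\phi(A)\phi(B)^{-1}) = \tr(PAB^{-1}P^t) = \tr(AB^{-1})$ by cyclicity of the trace. Replacing $B^{-1}$ by $B^k$ yields (3) in the same way.

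For (2) $\Rightarrow$ (3), I would first apply Lemma \ref{homo}(1) with $(\mathcal{A}, \mathcal{C}) = (S_n, S_n)$ to conclude that $\phi$ is a linear bijection of $S_n$. Next I would run a one-parameter perturbation: fix $A, X \in S_n$ and choose $t \in \mathbb{C}$ with $|t|$ small enough that $I + tX$ is invertible in $S_n$. Substituting $B = I + tX$ into (2), and using linearity together with unitality $\phi(I) = I$ to rewrite $\phi(I+tX) = I + t\phi(X)$ (which is also invertible for small $t$ by hypothesis), the Neumann series expansion produces an identity of two convergent power series,
\[
\sum_{k=0}^{\infty}(-t)^{k}\tr\!\left(\phi(A)\phi(X)^{k}\right) = \sum_{k=0}^{\infty}(-t)^{k}\tr(AX^{k}),
\]
and matching the coefficients of $t^k$ delivers condition (3).

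The crux is (3) $\Rightarrow$ (1). Taking $k = 1$ in (3) and applying Lemma \ref{homo}(2) guarantees that $\phi$ is a linear bijection of $S_n$. Setting $A = I$ and invoking unitality yields $\tr(\phi(B)^k) = \tr(B^k)$ for every $B \in S_n$ and $k \geq 1$, whence Newton's identities force $B$ and $\phi(B)$ to share the same characteristic polynomial; in particular $\det \phi(B) = \det B$ for all $B \in S_n$. Linearity then promotes this to $\det(\phi(A) + \lambda \phi(B)) = \det(A + \lambda B)$ for all $A, B \in S_n$ and $\lambda \in \mathbb{C}$, and at this point I would invoke Cao--Tang's classification of determinant preservers on $S_n$ (\cite[Theorem 1.1]{Ca04}) to obtain $\phi(A) = PAP^t$ for some invertible $P \in M_n$ with $(\det P)^2 = 1$. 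Finally, the unital constraint $PP^t = \phi(I) = I$ identifies $P$ as a complex orthogonal matrix, which is precisely the statement of (1).

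The main obstacle is this last step: one must bootstrap the trace-power identities to full characteristic-polynomial preservation via Newton's identities, and then borrow the Cao--Tang structural theorem as a black box to extract the bilateral form $PAP^t$. The perturbation step in (2) $\Rightarrow$ (3) is routine once linearity from Lemma \ref{homo} is in hand; the bookkeeping is purely formal power-series manipulation.
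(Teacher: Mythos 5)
Your argument is correct, and the first two implications --- the direct computation for (1) $\Rightarrow$ (2) and the Neumann-series/coefficient-matching step for (2) $\Rightarrow$ (3) after invoking Lemma \ref{homo} --- are essentially identical to the paper's. Where you genuinely diverge is in (3) $\Rightarrow$ (1). The paper stays self-contained: from $\tr(\phi(A)\phi(B)^k)=\tr(AB^k)$ and surjectivity it deduces $p(\phi(A))=\phi(p(A))$ for all polynomials $p$, then carries out a hands-on analysis of the images of the matrix units $E_{ii}$ and $D_{ij}=E_{ij}+E_{ji}$ (ranks, orthogonality of idempotents, a Horn--Johnson simultaneous-diagonalization result) to pin down $\phi$ as conjugation by a complex orthogonal matrix. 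You instead set $A=I$ to get $\tr(\phi(B)^k)=\tr(B^k)$, use Newton's identities to conclude $\det\phi(B)=\det B$, upgrade this by linearity to $\det(\phi(A)+\lambda\phi(B))=\det(A+\lambda B)$, and then invoke Cao--Tang \cite{Ca04} as a black box before using unitality to force $PP^t=I$. This is legitimate and noticeably shorter; there is no circularity, since Lemma \ref{traceSn4} is only needed for the implication (2) $\Rightarrow$ (1) of Theorem \ref{det4}, while the Cao--Tang classification underlies the separate implications (1) $\Leftrightarrow$ (3) and (1) $\Leftrightarrow$ (4), which the paper already imports. The trade-off is that the paper's route re-proves the relevant special case of the determinant-preserver theorem from scratch (and its Jordan-homomorphism machinery is reused almost verbatim in the proof of Theorem \ref{det2} for $M_n$, where your shortcut would have to be adapted to Tan--Wang's result instead), whereas yours leans on the external structural theorem but isolates the conceptually pleasant point that trace-power preservation plus linearity already yields full determinant preservation of pencils.
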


The proof of (3) $\Rightarrow$ (1) in Lemma \ref{traceSn4} below follows from the analogous idea in Section 3.
%It can also be shortened by using Corollary \ref{Cor} and \cite[Theorem]{Liu99}.

\begin{proof}[Proof of Lemma \ref{traceSn4}]

(1) $\Rightarrow$ (2): It is trivial.

(2) $\Rightarrow$ (3): By Lemma \ref{homo}, $\phi$ is linear. By assumption and $\phi(I)=I$, for small enough $|\lambda|$ we have $\tr(A(I-\lambda B)^{-1})=\tr(\phi(A)(I-\lambda \phi(B))^{-1})$. Hence for any $A, B\in S_n$
$$\sum_{k}\tr(AB^k)\lambda^k=\sum_{k}\tr(\phi(A)\phi(B)^k)\lambda^k.$$
Thus $\tr(AB^k)=\tr(\phi(A)\phi(B)^k)$.

(3) $\Rightarrow$ (1): By Lemma \ref{homo} and $\phi(I)=I$, we have that $\phi$ is linear, bijective and $\tr(A)=\tr(\phi(A))$. Note that for any polynomial $p(z)=\sum_{k=1}a_kz^k$, by assumption,
$$\tr([p(\phi(A))-\phi(p(A))]\phi(B))=\sum_ka_k\tr([\phi(A)^k-\phi(A^k)]\phi(B))
=\sum_ka_k(\tr(A^kB)-\tr(A^kB))=0.$$
Since $\phi$ is surjective, by Lemma \ref{lma1}, we have $$p(\phi(A))=\phi(p(A)).$$
Let $D_{ij}=E_{ij}+E_{ji}$ for $i\neq j$. Then $\phi(E_{ii})^2=\phi(E_{ii}^2)=\phi(E_{ii})$, $\phi(D_{ij})^2=\phi(D_{ij}^2)=\phi(E_{ii})+\phi(E_{jj})$, and $\phi(D_{ij})^3=\phi(D_{ij}^3)=\phi((E_{ii}+E_{jj})D_{ij})=\phi(D_{ij})$. Since $\tr(\phi(E_{ii})^k)=\tr(E_{ii}^k)$ and  $\tr(\phi(D_{ij})^k)=\tr(D_{ij}^k)$ for all $k$, by \cite[p.44, Problem 12]{Horn85}, $\phi(E_{ii})$ and $E_{ii}$ (resp., $\phi(D_{ij})$ and $D_{ij}$ have the same eigenvalues counting multiplicities. Moreover, all of them are diagonalizable. Therefore, rank $\phi(E_{ii})=1$ and rank $\phi(D_{ij})=2$. Expanding this equality $\phi[(E_{ii}+E_{jj})^2]=[\phi(E_{ii}+E_{jj})]^2=[\phi(E_{ii})+\phi(E_{jj})]^2$, we have $\phi(E_{ii})+\phi(E_{jj})=\phi(E_{ii})^2+\phi(E_{jj})^2+\phi(E_{ii})\phi(E_{jj})+\phi(E_{jj})\phi(E_{ii})$. Hence $$\phi(E_{ii})\phi(E_{jj})=-\phi(E_{jj})\phi(E_{ii}).$$
{Multiplying by $\phi(E_{jj})$ in the both sides and using that $\phi(E_{jj})^2=\phi(E_{jj})$}, we have this equality $[I_n+\phi(E_{jj})]\phi(E_{ii})\phi(E_{jj})=0$. Since $I_n+\phi(E_{jj})$ is invertible, we can obtain $\phi(E_{ii})\phi(E_{jj})=0$. Thus by \cite[p.216, Problem 24]{Horn85}, there exists a complex orthogonal matrix $Q\in M_n$ such that $\phi(E_{ii})=Q^{t}E_{ii}Q$. Replacing $\phi(A)$ with $Q\phi(A)Q^{t}$, we may assume $\phi(E_{ii})=E_{ii}$. Hence $\phi(D_{ij})^2=E_{ii}+E_{jj}$ and $\phi(D_{ij})=\phi(D_{ij})^3=(E_{ii}+E_{jj})\phi(D_{ij})=\phi(D_{ij})(E_{ii}+E_{jj})$. Thus
$$[\phi(D_{ij})]_{pq}=0 \quad \quad \forall p\neq i, j \ \text{or} \ q\neq i, j.$$
On the other hand, for any $k$, $$[\phi(D_{ij})]_{kk}=\tr(E_{kk}\phi(D_{ij}))=\tr(\phi(E_{kk})\phi(D_{ij}))=\tr(E_{kk}D_{ij})=0.$$
{Since $\phi(D_{ij})$ is symmetric, the above calculations show that $\phi(D_{ij})=a(E_{ij}+E_{ji})$, $a\neq0$.}  %If there exists $r\neq l, m$ such that
%$[\phi(D_{ij})]_{l,r}=[\phi(D_{ij})]_{r,l}=b\neq0$, then rank
%$\phi(D_{ij})=2$ implies that $[\phi(D_{ij})]_{s,m}=[\phi(D_{ij})]_{m,s}=0$ for $s\neq l$. Since $\phi(D_{ij})^2=E_{ii}+E_{jj}$, $[\phi(D_{ij})]_{l,t}=[\phi(D_{ij})]_{t,l}=0$ for $t\neq m, r$ and hence $a^2=b^2=a^2+b^2=1$. It is a contradiction. Thus $$[\phi(D_{ij})]_{p,q}=[\phi(D_{ij})]_{q,p}=0 \quad \quad \forall (p,q)\neq(l,m), (m,l)$$
Hence $\phi(D_{ij})=D_{ij}$ or $-D_{ij}$ since $\phi(D_{ij})^2=E_{ii}+E_{jj}$. Suppose $n=2$ and $\phi(D_{12})=-D_{12}$. Replacing $\phi(A)$ with $\left(\begin{array}{cc}
        1 & 0 \\
        0 & -1 \\
\end{array}
\right)\phi(A)\left(
\begin{array}{cc}
        1 & 0 \\
        0 & -1 \\
\end{array} \right)$, we may assume $\phi(D_{12})=D_{12}$. If $n\geq3$, then similarly we may assume that $\phi(D_{1j})=D_{1j}$ for all $j\neq 1$. Suppose $\phi(D_{uv})=-D_{uv}$ for some $u, v\neq 1$ and $u\neq v$. Then one can see that $\tr\phi((D_{1u}+D_{1v}+D_{uv})^3)=\tr(D_{1u}+D_{1v}+D_{uv})^3=6$ and
$\tr(\phi(D_{1u}+D_{1v}+D_{uv}))^3=\tr(D_{1u}+D_{1v}-D_{uv})^3=-6$. It is a contradiction since $\tr\phi(A^3)=\tr(\phi (A))^3$ for $A\in S_n$. Hence $\phi(D_{ij})=D_{ij}$ for all $i\neq j$. Therefore, $\phi(A)=A$ for all $A\in S_n$. This completes the proof.     \qedhere  \end{proof}

%The proof of the above lemma establishes the following result.

%\begin{corollary}\label{Cor} Let $\phi: S_n\rightarrow S_n$ be an unital map sending invertible matrices into invertible matrices. Suppose $\tr(\phi(A){\phi(B)}^{-1})=\tr(AB^{-1})$ for $A \in S_n$ and invertible $B\in S_n$. Then $p(\phi(A))=\phi(p(A))$ for $A\in S_n$ and polynomial $p$. In particular, $\phi$ sends rank one matrices into rank one matrices. \end{corollary}

%\begin{proof} By the proof in Lemma \ref{traceSn4}, we have that $p(\phi(A))=\phi(p(A))$ for $A\in S_n$ and polynomial $p$. In particular, if $A=uu^t$ for some $n\times 1$ nonzero vectors $u$. Hence $\phi(A)^2=\phi(A^2)=\phi(uu^tuu^t)=u^tu\phi(uu^t)$. If $u^tu\neq0$, then $\phi(A)$ is diagonalizable. Suppose $u^tu=0$. Moreover, $\tr(\phi(A)^k)=\tr(A^k)$ implies that $\phi(A)$ and $A$ have the same eigenvalues counting multiplicities. Thus rank $\phi(A)=1$.   \qedhere  \end{proof}

\section{Maps  preserving determinants of convex combinations on $M_n$}

In this section we consider the corresponding case on $M_n$. Similarly, we rewrite Tan and Wang's  result (\cite{Tan03}) by supplementing the trace equality.

\begin{theorem}\label{det2}

Let $\phi: M_n\rightarrow M_n$ be a map and  $\alpha^n=\det \phi(I)$. Then the following statements are equivalent.
\begin{itemize}
\item[(1)] There exist invertible matrices $M, N\in M_n$ with $\det(MN)=1$ such that $\phi$ is either
\begin{equation*}%\label{UAU}
\phi(A)=\alpha MAN   \quad  \quad  \forall \ A\in M_n \quad    \quad  \text{or}  \quad    \quad    \phi(A)=\alpha MA^{t}N  \quad \quad \forall \ A\in M_n.
\end{equation*}
\item[(2)] $\phi$ sends invertible matrices into invertible matrices and
\begin{equation*}%\label{UAU}
\tr(\phi(A){\phi(B)}^{-1})=\tr(AB^{-1})
\end{equation*}
 for all $A\in M_n$ and invertible matrix $B\in M_n$.
\item[(3)]
\begin{equation*}%\label{111}
\det(t\phi(A)+(1-t)\phi(B))=\alpha^n\det(tA+(1-t)B), \quad \quad A,B \in M_n, \ t\in [0,1]
\end{equation*}
\item[(4)] $\phi$ is surjective and there exist two specific $t\in [0,1]$ such that
\begin{equation*}%\label{111}
\det(t\phi(A)+(1-t)\phi(B))=\alpha^n\det(tA+(1-t)B), \quad \quad A,B \in M_n.
\end{equation*}
\end{itemize}

%{\color{red}{(Could we replace $\tr(AB^{-1}) =  \tr(\phi(A){\phi(B)}^{-1})$ in (2) by $\tr(AB) =  \tr(\phi(A)\phi(B))$? )}}

\end{theorem}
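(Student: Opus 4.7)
The plan is to mirror the proof of Theorem \ref{det4} for the $S_n$ case. The implication (1) $\Rightarrow$ (2) is an immediate verification, and (1) $\Leftrightarrow$ (3) together with (1) $\Leftrightarrow$ (4) are the Tan--Wang results quoted in \cite{Tan03}, so the essential new content is (2) $\Rightarrow$ (1).

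First, I would invoke Lemma \ref{homo}(1) with $(\mathcal{A}, \mathcal{C}) = (M_n, M_n)$ to conclude that $\phi$ is linear and bijective. Since $\phi$ sends invertibles to invertibles, $\phi(I)$ is invertible, and the normalization $\psi(A) := \phi(A)\phi(I)^{-1}$ defines a unital linear bijection on $M_n$ that inherits, via cyclicity of the trace,
\begin{equation*}
\tr(\psi(A)\psi(B)^{-1}) = \tr(\phi(A)\phi(B)^{-1}) = \tr(AB^{-1})
\end{equation*}
for every invertible $B \in M_n$.

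Next, for each $A, B \in M_n$ and each $\lambda$ in a small enough neighborhood of $0$, both $I - \lambda B$ and $I - \lambda \psi(B)$ are invertible, and the linearity of $\psi$ together with the displayed identity give $\tr(\psi(A)(I - \lambda \psi(B))^{-1}) = \tr(A(I - \lambda B)^{-1})$. Expanding both sides as Neumann series in $\lambda$ and matching coefficients yields $\tr(\psi(A)\psi(B)^k) = \tr(AB^k)$ for every $k \ge 0$ and all $A, B \in M_n$. Applying this relation with $k=2$ on one hand, and with $k=1$ but $B$ replaced by $B^2$ on the other, I obtain $\tr(\psi(A)[\psi(B^2) - \psi(B)^2]) = 0$ for every $A \in M_n$; since $\psi$ is surjective, $\psi(A)$ ranges over all of $M_n$, and Lemma \ref{lma1} (nondegeneracy of the trace form) forces $\psi(B^2) = \psi(B)^2$ for every $B$. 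Thus $\psi$ is a unital bijective Jordan homomorphism of $M_n$.

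By the classical Herstein theorem, such a $\psi$ on the simple algebra $M_n$ is either an algebra homomorphism or an antihomomorphism; Skolem--Noether (applied to $M_n$ or its opposite) then provides an invertible $S \in M_n$ with $\psi(A) = SAS^{-1}$ or $\psi(A) = SA^tS^{-1}$ for all $A$. Back-substituting $\phi(A) = \psi(A)\phi(I)$ yields $\phi(A) = MAN$ or $\phi(A) = MA^tN$ with $M = S$, $N = S^{-1}\phi(I)$; since $\det(MN) = \det \phi(I) = \alpha^n$, the rescaling $M \leftarrow \alpha^{-1/2} M$, $N \leftarrow \alpha^{-1/2} N$ produces the form in (1) with $\det(MN) = 1$. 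The main obstacle is the Jordan-property step: it uses both the Neumann-series expansion (hence the linearity from Lemma \ref{homo}) and the surjectivity of $\psi$ in order to peel off $\psi(A)$ via Lemma \ref{lma1}. In the paper's style this step will likely be isolated as a separate lemma analogous to Lemma \ref{traceSn4}, listing several equivalent trace conditions on a unital bijective map $M_n \to M_n$ that preserves invertibility.
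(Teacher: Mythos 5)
Your proposal is correct and, up to the point where $\psi$ is shown to be a unital bijective Jordan homomorphism, it coincides with the paper's argument: the paper likewise normalizes by $\phi(I)$ (multiplying on the left rather than the right, which is immaterial), invokes Lemma \ref{homo} to get linearity and bijectivity, and reuses the Neumann-series step from the proof of Lemma \ref{traceSn4} to obtain $\tr(\psi(A)\psi(B)^k)=\tr(AB^k)$ and hence $p(\psi(A))=\psi(p(A))$ via surjectivity and Lemma \ref{lma1}. Where you genuinely diverge is the endgame. The paper does not appeal to Herstein's theorem or Skolem--Noether; instead it runs an explicit matrix-unit computation: from $\psi(E_{ii})^2=\psi(E_{ii})$, $\psi(E_{ii})\psi(E_{jj})=0$, and $\operatorname{rank}\psi(E_{ii})=1$ it conjugates so that $\psi(E_{ii})=E_{ii}$, then analyzes $\psi(E_{ij}+E_{ji})$ and $\psi(E_{ij}-E_{ji})$ to show that either $\psi(E_{ij})\in\mathbb{C}^{\star}E_{ij}$ for all $i,j$ or $\psi(E_{ij})\in\mathbb{C}^{\star}E_{ji}$ for all $i,j$, and finally constructs a diagonal matrix $D$ with $\psi(A)=DAD^{-1}$ or $\psi(A)=DA^{t}D^{-1}$. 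Your route buys brevity and conceptual clarity at the cost of importing two classical structure theorems; the paper's route is longer but self-contained and exhibits the conjugating matrix explicitly. The only point worth making explicit in your write-up is that the hypothesis gives the trace identity only for invertible $B$, so the Neumann-series expansion must be justified by noting that $I-\lambda B$ is invertible for $|\lambda|$ sufficiently small depending on $B$ (exactly as in Lemma \ref{traceSn4}), after which matching Taylor coefficients yields the identity for all $B$.
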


\begin{proof} %[Proof of Theorem \ref{det2}]

By Tan and Wang's result in {\cite[Theorems 1 and 2]{Tan03}}, one can show the implications (1) $\Leftrightarrow$ (3) and (1) $\Leftrightarrow$ (4). It is easy to check (1) $\Rightarrow$ (2) and hence we only need to prove (2) $\Rightarrow$ (1).

(2) $\Rightarrow$ (1):  Define $\psi: M_n \rightarrow M_n$ by $\psi(A)={\phi(I)}^{-1}\phi(A)$. Then we can see that $\psi$ is unital and $\psi$ sends invertible matrices into invertible matrices. Moreover, $\tr(\psi(A){\psi(B)}^{-1})=\tr(\phi(A){\phi(B)}^{-1})=\tr(AB^{-1})$ for $A \in M_n$ and invertible $B\in M_n$. By Lemma \ref{homo}, $\psi$ is bijective and linear.  Similar to the proof in Lemma \ref{traceSn4}, we have that $p(\psi(A))=\psi(p(A))$ for any $A\in M_n$, polynomial $p$. Then $\psi(E_{ii})^2=\psi(E_{ii})$ and $\psi(E_{ii})$ are diagonalizable with $\psi(E_{ii})\psi(E_{jj})=0$, rank $\psi(E_{ii})=1$. Hence there exists an invertible matrix $P\in M_n$ such that $\psi(E_{ii})=P^{-1}E_{ii}P$. Replacing $\psi(A)$ with $P\psi(A)P^{-1}$, we may assume $\psi(E_{ii})=E_{ii}$. Let $D_{ij}=E_{ij}+E_{ji}$ for $i\neq j$. Then $\psi(D_{ij})^2=E_{ii}+E_{jj}$. Similar to the proof in Lemma \ref{traceSn4}, we have $\psi(E_{ij})+\psi(E_{ji})=\psi(D_{ij})=aE_{ij}+\frac{1}{a}E_{ji}$. Let $S_{ij}=E_{ij}-E_{ji}$. Similarly, we have %$[\psi(S_{ij})]_{p,q}=0$ for $p\neq i, j$ or $q\neq i, j$ and $[\psi(S_{ij})]_{kk}=0$ for any $k$.
$\psi(E_{ij})-\psi(E_{ji})=\psi(S_{ij})=cE_{ij}-\frac{1}{c}E_{ji}$. From the part $(2)\Rightarrow (3)$ of the proof of Lemma \ref{traceSn4} we also get that $\tr(AB^k)=\tr(\psi(A)\psi(B)^k)$, $k\in\N$. In particular, $$\tr[(\psi(E_{ij})+\psi(E_{ji}))(\psi(E_{ij})-\psi(E_{ji}))]
=\tr[(E_{ij}+E_{ji})(E_{ij}-E_{ji})]=0,$$
we have $\frac{c}{a}-\frac{a}{c}=0$ and hence $c=a$ or $c=-a$. If $c=a$, then $\psi(E_{ij})=aE_{ij}$ and $\psi(E_{ji})=\frac{1}{a}E_{ji}$. If $c=-a$, then $\psi(E_{ij})=\frac{1}{a}E_{ji}$ and $\psi(E_{ji})=aE_{ij}$.

We claim that there are no pairwise distinct $i, j, k$ such that $\psi(E_{ij})=aE_{ij}$ but $\psi(E_{ik})=\frac{1}{b}E_{ki}$. Otherwise, $[\psi(E_{ij})+\psi(E_{ik})]^2=(aE_{ij}+\frac{1}{b}E_{ki})^2=\frac{a}{b}E_{kj}$ and $\psi[(E_{ij}+E_{ik})^2]=\psi(0)=0$. It leads to a contradiction. Similarly, there are no pairwise distinct $i, j, k$ such that $\psi(E_{ij})=\frac{1}{a}E_{ji}$ but $\psi(E_{kj})=bE_{kj}$. Therefore, either $\psi(E_{ij})\in \mathbb{C}^{\star} E_{ij}$ for all $i, j$ or $\psi(E_{ij})\in \mathbb{C}^{\star}E_{ji}$ for all $i, j$. Here $\mathbb{C}^{\star}=\mathbb{C}\setminus\{0\}$. Suppose $\psi(E_{ij})=a_{ij}E_{ij}$ for all $i, j$. Since $\psi[(E_{ij}+E_{j1})^2]=[\psi(E_{ij})+\psi(E_{1j})]^2$, we have $a_{i1}E_{i1}=(a_{ij}E_{ij}+a_{j1}E_{j1})^2=a_{ij}a_{j1}E_{i1}$. Hence $a_{ij}=\frac{a_{i1}}{a_{j1}}$. Let $D=\diag(a_{11}, a_{21}, \ldots, a_{n1})$. Then $\psi(A)=DAD^{-1}$ and hence $$\phi(A)=\alpha(\frac{1}{\alpha}\phi(I)D)AD^{-1}.$$
In addition, $\det(\frac{1}{\alpha}\phi(I)DD^{-1})=\frac{1}{\alpha^n}\det\phi(I)=1$.

Similarly, if $\psi(E_{ij})=b_{ij}E_{ji}$ for all $i, j$, then $\psi(A)=DA^tD^{-1}$ for some diagonal matrix $D$. Hence
$$\phi(A)=\alpha(\frac{1}{\alpha}\phi(I)D)A^tD^{-1}$$
with $\det(\frac{1}{\alpha}\phi(I)DD^{-1})=1$. Our assertion follows.    \qedhere  \end{proof}

\section{Maps  preserving determinants of convex combinations on $T_n$}

In this section we add the trace equality to the case on $T_n$ which have been studied by Tan and Wang (\cite{Tan03}).

\begin{theorem}\label{det0}
Let $\phi: T_n\rightarrow T_n$ be a map and  $\alpha^n=\det \phi(I)$. Then the following statements are equivalent.
\begin{itemize}
\item[(1)] There exist a permutation $\sigma$ of degree $n$ and $\lambda_1$, $\ldots$, $\lambda_n$ with $\prod_{i=1}^n\lambda_i=1$ such that for all $A \in T_n$, \begin{equation*}
[\phi(A)]_{ii}=\alpha\lambda_iA_{\sigma(i) \sigma(i)} \quad \quad \forall \ 1\leq i\leq n.
\end{equation*}
\item[(2)] $\phi$ sends invertible matrices into invertible matrices and
\begin{equation*}%\label{UAU}
\tr(\phi(A){\phi(B)}^{-1})=\tr(AB^{-1})
\end{equation*}
 for all $A\in T_n$ and invertible matrix $B\in T_n$.
\item[(3)]
\begin{equation*}%\label{111}
\det(t\phi(A)+(1-t)\phi(B))=\alpha^n\det(tA+(1-t)B), \quad \quad A,B \in T_n, \ t\in [0,1]
\end{equation*}
\item[(4)] $\phi$ is surjective and there exist two specific $t\in [0,1]$ such that
\begin{equation*}%\label{111}
\det(t\phi(A)+(1-t)\phi(B))=\alpha^n\det(tA+(1-t)B), \quad \quad A,B \in T_n.
\end{equation*}
\end{itemize}

\end{theorem}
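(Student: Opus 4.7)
The plan is to obtain $(1) \Leftrightarrow (3)$ and $(1) \Leftrightarrow (4)$ from Tan and Wang's results in \cite{Tan03}, and then concentrate on the new equivalence $(1) \Leftrightarrow (2)$. The direction $(1) \Rightarrow (2)$ should be a direct verification: the form in (1) makes $\phi(A)$ invertible exactly when $A$ is, and the trace of a product of upper triangular matrices depends only on their diagonals, so one can compute $\tr(\phi(A)\phi(B)^{-1}) = \sum_i (\alpha\lambda_i A_{\sigma(i)\sigma(i)})/(\alpha\lambda_i B_{\sigma(i)\sigma(i)}) = \sum_j A_{jj}/B_{jj} = \tr(AB^{-1})$. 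The substance lies in the main implication $(2) \Rightarrow (1)$, which I would attack as follows.

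First I would normalize by setting $\psi(A) = \phi(I)^{-1}\phi(A)$ so that $\psi : T_n \to T_n$ becomes unital, sends invertibles to invertibles, and (by cyclicity of trace) still satisfies $\tr(\psi(A)\psi(B)^{-1}) = \tr(AB^{-1})$. Next I would define $\psi' : T_n \to D_n$ by retaining only the diagonal of $\psi$, namely $\psi'(A) = \diag([\psi(A)]_{11}, \ldots, [\psi(A)]_{nn})$. Since for $X, Y \in T_n$ with $Y$ invertible one has $\tr(XY^{-1}) = \sum_i X_{ii}/Y_{ii}$, the trace identity passes to $\tr(\psi'(A)\psi'(B)^{-1}) = \tr(AB^{-1})$, and $\psi'$ maps invertible elements of $T_n$ into invertible elements of $D_n$; Lemma \ref{homo} then yields that $\psi'$ is linear. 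Using $\psi'(I) = I$ and substituting $I - \lambda B$ for $B$, I would expand both sides of the trace identity as geometric series in small $\lambda$ and match coefficients to obtain $\sum_i [\psi'(A)]_{ii}[\psi'(B)]_{ii}^k = \sum_i A_{ii} B_{ii}^k$ for every $k \geq 0$ and $A, B \in T_n$. Setting $A = B$ will force the multisets $\{[\psi'(A)]_{ii}\}_i$ and $\{A_{ii}\}_i$ to coincide; in particular $\psi'$ must annihilate strictly upper triangular matrices and therefore factor through a linear map $\widetilde{\psi} : D_n \to D_n$ with the same multiset-preservation property.

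The hardest step, I expect, will be promoting this multiset property of $\widetilde{\psi}$ to a genuine permutation. Identifying $D_n$ with $\C^n$ via the diagonal, I would write $\widetilde{\psi}(\diag(a)) = \diag(Ma)$ for a suitable $M \in M_n$; the multiset condition means that for every $a \in \C^n$ there is some permutation $\sigma_a$ with $Ma = P_{\sigma_a} a$, where $P_{\sigma_a}$ denotes the corresponding permutation matrix. Setting $S_\sigma = \{a \in \C^n : Ma = P_\sigma a\}$, each $S_\sigma$ is a linear subspace and $\bigcup_\sigma S_\sigma = \C^n$; since a complex vector space is not a finite union of proper subspaces, some $S_\sigma$ must equal $\C^n$, forcing $M = P_\sigma$ for a single fixed permutation $\sigma$. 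Unwinding will then give $[\psi(A)]_{ii} = A_{\sigma(i)\sigma(i)}$ and hence $[\phi(A)]_{ii} = [\phi(I)]_{ii} A_{\sigma(i)\sigma(i)}$; choosing $\alpha$ with $\alpha^n = \det\phi(I)$ and $\lambda_i = [\phi(I)]_{ii}/\alpha$ yields $\prod_i \lambda_i = \alpha^{-n}\det\phi(I) = 1$ and the claimed form of (1).
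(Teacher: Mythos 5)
Your proposal is correct and follows essentially the same route as the paper: normalize by $\phi(I)^{-1}$, pass to the diagonal to get a map $T_n\rightarrow D_n$, invoke Lemma \ref{homo} for linearity, upgrade the trace identity to $\tr(AB^k)$ via the geometric-series expansion, and conclude that the multiset of diagonal entries is preserved (the paper packages these steps as Lemma \ref{Tn2}). The one place you go beyond the paper is the final step: the paper simply asserts that eigenvalue-multiset preservation for every $A$ yields a single fixed permutation $\sigma$, whereas your argument via the subspaces $S_\sigma=\{a:Ma=P_\sigma a\}$ and the fact that $\C^n$ is not a finite union of proper subspaces supplies the justification that the paper leaves implicit.
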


\begin{remark}
Following the similar proofs in {\cite[Theorems 1' and 2']{Tan03}}, and Lemma \ref{Tn2} (see below), one can show that Theorem \ref{det0} holds if $T_n$ is replaced by $D_n$ and $\phi: T_n \rightarrow T_n$ is replaced by $\phi: D_n \rightarrow D_n$.
\end{remark}

To prove Theorem \ref{det0}, we need the following lemma.

\begin{lemma}\label{Tn2}
Let $\phi: T_n\rightarrow T_n$ be an unital map sending invertible matrices into invertible matrices. Suppose $\tr(\phi(A){\phi(B)}^{-1})=\tr(AB^{-1})$ for $A \in T_n$ and invertible $B\in T_n$. Then there exists a permutation $\sigma$ of degree $n$ such that for all $A \in T_n$,
 \begin{equation*}
[\phi(A)]_{ii}=A_{\sigma(i) \sigma(i)} \quad \quad \forall \ 1\leq i\leq n.
\end{equation*}
\end{lemma}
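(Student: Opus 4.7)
My plan is as follows. First, Lemma \ref{homo}(1) yields that $\phi$ is linear on $T_n$, and unitality gives $\phi(I)=I$. Imitating the power-series trick from the proof of (2)$\Rightarrow$(3) of Lemma \ref{traceSn4}, I would substitute $I-\lambda B$ in place of $B$ (invertible for small $|\lambda|$, and hence mapped to the invertible $I-\lambda\phi(B)$) and compare coefficients of $\lambda$ in the identity $\tr(\phi(A)(I-\lambda\phi(B))^{-1}) = \tr(A(I-\lambda B)^{-1})$ to upgrade the hypothesis to
\begin{equation*}
\tr(\phi(A)\phi(B)^k) = \tr(AB^k) \qquad \forall\, A,B\in T_n,\ k\geq 0.
\end{equation*}

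The main step is then to extract from this identity a complete description of $\phi$ on the standard basis $\{E_{kl}: k\leq l\}$. Setting $A=I$ and $B=E_{mm}$ yields $\tr(\phi(E_{mm})^k)=1$ for every $k\geq 1$. Since $\phi(E_{mm})$ is upper triangular, so is $\phi(E_{mm})^k$, with diagonal equal to the $k$-th powers of the diagonal of $\phi(E_{mm})$; writing those entries as $\lambda_1,\ldots,\lambda_n$, we get $\sum_i \lambda_i^k=1$ for all $k\geq 1$. Newton's identities then force the elementary symmetric functions to satisfy $e_1=1$ and $e_j=0$ for $j\geq 2$, so the multiset $\{\lambda_1,\ldots,\lambda_n\}$ is $\{1,0,\ldots,0\}$. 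Writing $d_m$ for the position of the single $1$, the unital relation $\sum_m \phi(E_{mm}) = I$ forces $m\mapsto d_m$ to be a bijection of $\{1,\ldots,n\}$; setting $\sigma = d^{-1}$ gives $[\phi(E_{mm})]_{ii} = \delta_{\sigma(i),m}$.

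For the strictly upper triangular units $E_{kl}$ with $k<l$, the computation $E_{kl}E_{mm}^j = \delta_{lm}E_{km}$ together with $k\neq m$ in the only nonzero case yields $\tr(\phi(E_{kl})\phi(E_{mm})^j)=0$ for every $m$ and $j\geq 1$. A short direct check, using that a product of two upper triangular matrices has $(i,i)$-entry equal to the product of their $(i,i)$-entries, then gives
\begin{equation*}
0 = \tr(\phi(E_{kl})\phi(E_{mm})^j) = \sum_i [\phi(E_{kl})]_{ii}\cdot \delta_{\sigma(i),m} = [\phi(E_{kl})]_{\sigma^{-1}(m),\sigma^{-1}(m)},
\end{equation*}
and letting $m$ range over $\{1,\ldots,n\}$ kills every diagonal entry of $\phi(E_{kl})$. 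Linearity now finishes the argument:
\begin{equation*}
[\phi(A)]_{ii} = \sum_{k\leq l} A_{kl}[\phi(E_{kl})]_{ii} = \sum_m A_{mm}\delta_{\sigma(i),m} = A_{\sigma(i)\sigma(i)}.
\end{equation*}

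The principal obstacle is the diagonal analysis of $\phi(E_{mm})$: one must both distill from all the power sums $\tr(\phi(E_{mm})^k)$ that the diagonal reduces to a single $1$, and then use $\phi(I)=I$ to promote the resulting indices $d_m$ into a genuine permutation of $\{1,\ldots,n\}$. Once these two ingredients are in place, the upper triangular structure of $T_n$ makes the off-diagonal vanishing and the final assembly essentially automatic.
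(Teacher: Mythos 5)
The one genuine problem is your opening step: Lemma \ref{homo}(1) does \emph{not} give that $\phi$ itself is linear on $T_n$. That lemma covers the pairs $(M_n,M_n)$, $(S_n,S_n)$ and $(T_n,D_n)$ only, and the restriction to codomain $D_n$ in the triangular case is substantive, not cosmetic: for upper triangular $X,Y$ one has $\tr(XY)=\sum_i X_{ii}Y_{ii}$, so the pairing $(X,Y)\mapsto\tr(XY)$ on $T_n\times T_n$ is blind to strictly upper triangular parts and the separation condition (2) of Lemma \ref{homo1} fails with $\mathcal{C}=T_n$. Concretely, the hypotheses of the present lemma put no constraint at all on the off-diagonal part of $\phi$: the nonlinear map $\phi(A)=A+\|A-I\|\,E_{12}$ is unital, sends invertible matrices to invertible matrices, and satisfies $\tr(\phi(A)\phi(B)^{-1})=\tr(AB^{-1})$, so the assertion ``$\phi$ is linear'' is simply false in this generality. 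The repair is exactly the paper's first move: pass to $\psi=\diag\circ\phi\colon T_n\to D_n$, which inherits unitality, invertibility preservation and the trace identity (again because only diagonal entries enter these traces), and apply Lemma \ref{homo} to $\psi$. Since every quantity in your subsequent argument --- the power sums $\tr(\phi(E_{mm})^k)$, the traces $\tr(\phi(E_{kl})\phi(E_{mm})^j)$, the final expansion of $[\phi(A)]_{ii}$, and the power-series step, where you need $\phi(I-\lambda B)$ to have diagonal $I-\lambda\,\diag\phi(B)$ --- depends only on diagonal entries, the whole computation goes through verbatim with $\psi$ in place of $\phi$.

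With that correction your argument is sound, and its second half is a genuinely different (and in one respect more careful) route than the paper's. The paper deduces $\tr(\psi(A)^k)=\tr(A^k)$ for each fixed $A$ and invokes the power-sum criterion to conclude that $\psi(A)$ and $A$ have the same eigenvalues, which a priori yields a permutation depending on $A$; the independence of $\sigma$ from $A$ is left implicit. Your version --- Newton's identities applied to the basis elements $E_{mm}$, the use of $\sum_m\psi(E_{mm})=I$ to force $m\mapsto d_m$ to be a bijection, and the explicit vanishing of the diagonal of $\psi(E_{kl})$ for $k<l$ --- pins down that uniformity cleanly and is worth keeping.
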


\begin{proof}
As the idea in {\cite[Theorems 1' and 2']{Tan03}}, we can define $\psi: T_n \rightarrow D_n$ by $\psi(A)=\diag\phi(A)$, where $\diag A$ is the diagonal matrix having the same diagonal entries as $A\in T_n$. Then it is easily to see that $\psi$ is an unital map sending invertible matrices into invertible matrices and $\psi$ satisfies $ \tr(\psi(A){\psi(B)}^{-1})=\tr(AB^{-1})$ for $A \in T_n$ and invertible $B\in T_n$. By Lemma \ref{homo}, $\psi$ is linear. Similar to the proof in Lemma \ref{traceSn4}, we have that $\tr(\psi(A){\psi(B)}^k)=\tr(AB^k)$ for $A, B \in T_n$. Thus $\tr(\psi(A)^k)=\tr(A^k)$ for all $k$ and hence  by \cite[p.44, Problem 12]{Horn85}, $\psi(A)$ and $A$ have the same eigenvalues counting multiplicities. Since $A\in T_n, \psi(A)\in D_n$, there exists a permutation $\sigma$ of degree $n$ such that $\psi(A)=\diag(A_{\sigma(1) \sigma(1)}, \ldots, A_{\sigma(n) \sigma(n)})$. Therefore, $[\phi(A)]_{ii}=A_{\sigma(i) \sigma(i)}$ for all $i$.  \qedhere  \end{proof}

We can now present the proof of Theorem \ref{det0}.

\begin{proof}[Proof of Theorem \ref{det0}]
The implications (1) $\Leftrightarrow$ (3) and (1) $\Leftrightarrow$ (4) can be deduced by Tan and Wang's result in {\cite[Theorems 1' and 2']{Tan03}}. It is easy to check (1) $\Rightarrow$ (2). We only need to prove (2) $\Rightarrow$ (1).

(2) $\Rightarrow$ (1): Define $\psi: T_n \rightarrow T_n$ by { $\psi(A)=\phi(I)^{-1}\phi(A)$}. Then $\psi$ is an unital map sending invertible matrices into invertible matrices and $\psi$ satisfies $\tr(\psi(A){\psi(B)}^{-1})=\tr(AB^{-1})$ for $A \in T_n$ and invertible $B\in T_n$. Then by Lemma \ref{Tn2}, there exists a permutation $\sigma$ of degree $n$ such that for all $A \in T_n$, $[\psi(A)]_{ii}=A_{\sigma(i) \sigma(i)}$  for all $i$. Let $\lambda_i=\alpha^{-1}[\phi(I)]_{ii}$ for all $i$. Then $[\phi(A)]_{ii}=\alpha\lambda_iA_{\sigma(i) \sigma(i)}$. In addition, $\prod_{i=1}^n\lambda_i=\alpha^{-n}\det(\phi(I))=1$.
 \qedhere  \end{proof}

\section{Acknowledgement}
The authors would like to thank Professors Man-Duen Choi, Chi-Kwong Li, Lajos Moln\'ar, Tin-Yau Tam, and Ngai-Ching Wong for their useful comments. We are grateful for their important suggestions which helped to improve this paper. Liu was supported by the Taiwan MOST grant (104-2811-M-110-027). Szokol was supported by the ``Lend\" ulet'' Program (LP2012-46/2012) of the Hungarian Academy of Sciences.

\end{document}